%
%
%
%

\documentclass[reqno]{amsart}

\synctex=1

\usepackage{amssymb,latexsym}
\usepackage{amsfonts}
\usepackage{dsfont}
\usepackage[pdftex]{hyperref}
\usepackage{color}
\usepackage{graphicx}
\usepackage{parskip}
\usepackage{mathrsfs}

\theoremstyle{plain}

\newtheorem{theorem}{Theorem}[section]
\newtheorem{lemma}{Lemma}[section]

\numberwithin{equation}{section}

\setlength{\textheight}{21.8truecm}
\setlength{\textwidth}{15.0truecm}
\marginparwidth  0truecm
\oddsidemargin   01truecm
\evensidemargin  01truecm
\marginparsep    0truecm

\newcommand{\p}{\partial}
\newcommand{\m}{\mathbb}

\newcommand{\rr}{\mathbb{R}}

\newcommand{\ci}{\mathbb{T}}

\newcommand{\wh}{\widehat}
\newcommand{\ee}{\varepsilon}

\begin{document}

\title[Classical Solutions of the FW Equation]{Classical Solutions of the Fornberg-Whitham Equation}
\author{Georgia Burkhalter, Ryan C. Thompson$^*$, \& Madison Waldrep}
\date{December 19, 2022. $^*$Corresponding author: ryan.thompson@ung.edu}

\keywords{Fornberg-Whitham equation, Cauchy problem, Sobolev spaces, well-posedness, classical solutions, diffeomorphisms, conserved quantities.}

\subjclass[2020]{Primary: 35Q53}

\address{Department of Mathematics\\University of North Georgia\\ Dahlonega, GA 30597}
\email{gaburk1185@ung.edu}
\address{Department of Mathematics\\University of North Georgia\\ Dahlonega, GA 30597}
\email{ryan.thompson@ung.edu}
\address{Department of Mathematics\\University of North Georgia\\ Dahlonega, GA 30597}
\email{mbwald7063@ung.edu}

\begin{abstract}
In this paper, we prove well-posedness in $C^1(\rr)$ (a.k.a. classical solutions) of the Fornberg-Whitham equation.  To achieve this objective, we study its weak formulation under a Lagrangian framework.  Applying the fundamental theorem of ordinary differential equations to the generated semi-linear system, we then construct a unique solution to the equation that is continuously dependent on the initial data.  These results improve upon others in Sobolev and Besov spaces.
\end{abstract}

\maketitle

%
%
%
%
\section{Introduction}
We consider the Cauchy problem for the Fornberg-Whitham (FW) equation
\begin{equation}
\label{fw}
\begin{cases}
u_{xxt}-u_t+\frac92u_xu_{xx}+\frac 32uu_{xxx}-\frac32uu_x+u_x=0 \\ 
u(x,0)=u_0(x),
\end{cases}
\end{equation}
when $x \in \rr$ or $\ci$ and $ t \in \m{R}$.  This equation was first written down in 1967 by Whitham \cite{whitham} and again by Whitham and Fornberg \cite{fw} as a model for breaking waves.  Additionally, \eqref{fw} also admits the following nonlocal form
\begin{equation}
\label{fwnl}
u_t+\frac32uu_x=\p_x(1-\p_x^2)^{-1}u,
\end{equation}
where $\mathcal{F}[ {(1-\p_x^2)^{-1}f }]= \frac1{1+\xi^2} \wh f(\xi)$ for any test function $f$.  

We show that the FW equation is well-posed in the space of bounded and continuously differentiable functions on the real line, denoted $C^1$, and equipped with the norm
\[
\|f\|_{C^1} = \sup_{x\in \rr}|f(x)|+\sup_{x\in \rr}\left|\frac{d}{dx}f(x)\right|.
\]
More precisely, if we endow \eqref{fwnl} with initial data $u_0 \in C^1$, we have a corresponding uniquely constructed solution $u(x,t) \in C([-T,T];C^1)$ and the solution is continuously dependent on the initial data.  Furthermore, we find a lifespan estimate that depends on the size of the initial data.  In fact, we find that during this lifespan, the solution remains bounded by two times the size of the initial data.

If we write the FW equation in this form, we see that it resembles a special case of a family of nonlinear wave equations
\begin{equation}
\label{burgers}
u_t+\alpha uu_x=\mathscr{L}(u),
\end{equation}
where $\alpha \in \m{R}$ and $\mathscr{L}$ is a linear operator with constant coefficients, which has been studied by multiple authors.  In both \cite{whitham} and \cite{fw}, the FW equation \eqref{fwnl} was compared with the Korteweg-de Vries (KdV) equation
\[
u_t+6uu_x=-\p_x^3u,
\]
which was first derived by Boussinesq \cite{b} in 1877 and then by Korteweg and de Vries \cite{kdv} in 1895.  Indeed, we may see this comparison by letting $\alpha=6$ and $\mathscr{L} = -\p_x^3$.

It is well-known that the KdV equation has a bi-Hamiltonian structure and is completely integrable \cite{dj}.  Additionally, the KdV equation admits the soliton solution
\[
u(x,t) = \frac{c}{2}\text{sech}^2\left[\frac{\sqrt{c}}{2}(x-ct)\right],
\]
which maintains a constant shape and moves at a constant velocity.  KdV, however, does not exhibit the property of breaking waves and hence mathematicians and physicists alike continued their search for such an equation.

It was then in 1967, while Whitham \cite{whitham} was exploring applications to water waves he wrote down the FW equation \eqref{fwnl} and noted that it produced the so-called ``peakon" solution with maximum amplitude of 8/9, i.e. a soliton that is not differentiable at its peak.  Then in 1978, Whitham and Fornberg \cite{fw} researched both numerical and theoretical results on the FW equation and wrote down the explicit peakon solution
\[
u(x,t)=\frac89e^{-\frac12\left|x-\frac43t\right|},
\]
along with noting the wave breaking properties.  In fact, $8/9$ was found as a limiting behavior of the amplitude of the above exponential peakon.  Furthermore, the FW equation has been shown to exhibit the following conservation laws
\[
E_1(u) = \int_\rr udx, \ \ \ E_2(u) = \int_\rr u^2dx, \ \ \ E_3(u) = \int_\rr(u(1-\p_x^2)^{-1}u-u^3)dx.
\]

In regards to the discovery of wave breaking, Whitham and Fornberg's mathematical arguments were incredibly formal.  We refer the reader to their original paper \cite{fw} and to Contantin and Escher \cite{ce} for further discussion.  Since then, many authors have researched and found other nonlinear wave equations similar to FW.

Indeed, when $\alpha = 1$ and $\mathscr{L}(u) = -\p_x(1-\p_x^2)^{-1}\left(u^2+\frac12u_x^2\right)$, we have that \eqref{burgers} becomes the celebrated Camassa-Holm (CH) equation
\[
u_t+uu_x = -\p_x(1-\p_x^2)^{-1}\left(u^2+\frac12u_x^2\right),
\]
which models the unidirectional propagation of shallow water waves over a flat bottom \cite{ch, ch1} as well as axially symmetric waves in hyperelastic rods \cite{dai}.  The CH equation also admits peakon solutions \cite{con1, ce1, ce2}  of the form
\[
u(x,t) = ce^{-|x-ct|}, \ \ c>0.
\]
Furthermore, solutions exhibit blow-up in the form of wave breaking \cite{ce}.  Global conservative solutions were also established by Bressan and Constantin \cite{bc}, where they transform the CH equation into a semilinear system of ODEs and obtain solutions as fixed points of a contractive transformation.  Furthermore, classical solutions in the periodic setting were also investigated for the CH equation in Misio\l ek \cite{mis} where the author utilizes the structure of the equation along with delicate commutator estimates and methods from Arnold's paper \cite{arn}.  An extension of classical solutions to the real line was recently given by Holmes and Thompson\cite{ht2}.  It's in these above results where we draw inspiration for our paper.

The CH equation possesses many other 
remarkable properties such as infinitely many conserved quantities, a bi-Hamiltonian structure and a Lax pair.
For more information about how CH arises in the context of hereditary symmetries we refer to \cite{ff}.  Concerning it's physical relevance, we refer the reader to the works by Johnson \cite{j1}, \cite{j2} and Constantine and Lannes \cite{cl}.

For $\alpha = 1$ and $\mathscr{L}(u) = -\frac32\p_x(1-\p_x^2)^{-1}(u^2)$, \eqref{burgers} becomes the Degasperis - Procesi (DP) equation
\[
u_t+uu_x = -\frac32\p_x(1-\p_x^2)^{-1}(u^2)
\]
which was discovered by Degasperis and Procesi \cite{dp} when they were in search of asymptotically integrable PDEs.  Degasperis, Holm, and Hone proved the equation was integrable \cite{dhh} by constructing a Lax pair, and showed the DP equation admits peakon solutions of the form
\[
u(x,t) = \pm ce^{-|x-ct|}, \ \ c>0.
\]
It was also shown in \cite{ely, ly1, ly2} that blow-up occurs in finite time.

It's important to mention that the aforementioned equations are integrable; they possess infinitely many conserved quantities, an infinite hierarchy of quasi-local symmetries, a Lax pair and bi-Hamiltonian structure.  In contrast, the FW equation is not integrable.  However, it has been shown that FW is locally well-posed in both Sobolev and Besov spaces $H^s$ and $B_{p,r}^s$ for $s>3/2$ and ill-posed in Besov spaces $B_{2,\infty}^{3/2}$ and $B_{p,\infty}^s$ for $s>1+1/p$ and $p \in [1,\infty]$.  Furthermore, while FW is locally well-posed in particular Sobolev and Besov spaces, the regularity of the data-to-solution map is sharp.  Indeed, it has been shown that the data-to-solution map is continuous but not uniformly continuous.  For more on well-posedness results, we refer the reader to Holmes et. al. \cite{h2, ht1} and Guo \cite{guo}.

To our knowledge, beyond the results established in \cite{ht2, mis}, little has been done to show that other wave equations exhibit the property of yielding classical solutions.  Our goal is to establish well-posedness of FW in $C^1(\rr)$ which, along with the tools found in \cite{ht2}, should provide an apparatus for others seeking classical solutions to other nonlinear dispersive equations.  We now state our main results.

\begin{theorem}
\label{wp}
The initial value problem for the Fornberg-Whitham equation is well posed in $C^1$.
\end{theorem}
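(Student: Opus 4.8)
To prove Theorem~\ref{wp}, the plan is to recast \eqref{fwnl} as a semilinear system of ODEs on a Banach space through a Lagrangian change of variables and then invoke the fundamental theorem of ODEs. First I would record that $(1-\p_x^2)^{-1}f = p*f$ with $p(x)=\tfrac12 e^{-|x|}$, so that the nonlocal term is $\p_x(1-\p_x^2)^{-1}u = p'*u$ and \eqref{fwnl} reads $u_t + \tfrac32 u u_x = p'*u$. I then introduce the flow $q(x,t)$ solving $q_t = \tfrac32\, u(q,t)$ with $q(x,0)=x$, and set $U = u\circ q$, $w = q_x$, and $V = u_x\circ q$. Differentiating the equation in $x$ and using the identity $p''=p-\delta$ (so that $\p_x(p'*u)=p*u-u$) converts the PDE into the closed system
\[
q_t=\tfrac32 U,\quad w_t = \tfrac32 V w,\quad U_t = (p'*u)\circ q,\quad V_t = (p*u)\circ q - U - \tfrac32 V^2,
\]
where, after the substitution $y=q(z,t)$ in the convolutions, the nonlocal terms become the integral operators $\int_\rr p'(q(x)-q(z))\,U(z)w(z)\,dz$ and $\int_\rr p(q(x)-q(z))\,U(z)w(z)\,dz$. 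Writing $q = \mathrm{id}+\phi$, the right-hand side $F=(\tfrac32 U,\ \tfrac32 Vw,\ \dots)$ depends on $q$ only through the differences $q(x)-q(z)$.

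Next I would fix a Banach space --- e.g.\ bounded displacements $\phi$, with $w,U,V$ bounded and $w$ bounded below so as to preserve the diffeomorphism property --- and prove that $F$ is bounded and Lipschitz on bounded subsets, using only $p,p'\in L^1\cap L^\infty$. Picard--Lindel\"of (the fundamental theorem of ODEs in a Banach space) then yields a unique solution $(\phi,w,U,V)\in C([-T,T];\,\cdot\,)$ on a time interval $T=T(\|u_0\|_{C^1})$, together with Lipschitz dependence on the initial datum. The exponential formula $w(x,t)=\exp\!\left(\tfrac32\int_0^t V\,ds\right)$ forces $w>0$, so $q(\cdot,t)$ remains an increasing $C^1$-diffeomorphism; I can then define $u(x,t)=U(q^{-1}(x,t),t)$, verify $u\in C([-T,T];C^1)$, and check directly that it solves \eqref{fwnl}.

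Continuous dependence on $u_0$ in $C^1$ then follows by transporting the Lipschitz dependence of $(\phi,U,V)$ back through the reconstruction $u=U\circ q^{-1}$, and the quantitative bound $\|u(t)\|_{C^1}\le 2\|u_0\|_{C^1}$ falls out of a Gr\"onwall estimate on the ODE system together with the explicit choice of $T$.

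The main obstacle is establishing that the integral operators --- in particular the one with kernel $p'$, which has a jump discontinuity at the origin --- are Lipschitz in the state on bounded sets. I expect to handle this by splitting each integral across the diagonal $z=x$ and rewriting the kernels as the smooth exponentials $e^{\mp(q(x)-q(z))}$ on $z\lessgtr x$; because $q$ is increasing these factors are uniformly bounded by $1$, and the $\delta$-part of $p''$ contributes only the harmless local term $-U$ in the $V$-equation. The remaining care lies in showing that $q\mapsto q^{-1}$ and composition preserve $C^1$ regularity and depend continuously on the data, which is precisely where the lower bound on $w$ is essential.
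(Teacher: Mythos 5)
Your proposal follows essentially the same route as the paper: pass to Lagrangian variables along the flow $\eta_t=\tfrac32 u(\eta,t)$, rewrite the nonlocal terms as integral operators in the transported unknowns, verify a Lipschitz estimate on a ball in a product Banach space, apply the fundamental ODE theorem, and reconstruct $u=w\circ\eta^{-1}$ after checking that the flow stays a $C^1$-diffeomorphism. The only cosmetic difference is that you carry the displacement $\phi=\eta-\mathrm{id}$ as an extra state variable, whereas the paper works with the three-component system $(u\circ\eta,\ u_x\circ\eta,\ \eta_x)$ alone, expressing the kernel arguments as $\int_x^z \eta_x\,dy$ and recovering $\eta$ afterward from $\eta=x+\tfrac32\int_0^t w\,d\tau$.
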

In regards to the aforementioned theorem, we show how to construct a solution $u \in C([-T,T]; C^1) \cap C^1([-T,T];C)$ from the solution to a system of ODEs. Since it is constructed from a system of ODEs, and the ODE system has a unique solution, it follows that there is a uniquely constructed solution. Using the ODEs, we can also show the solution depends continuously on the initial data, and additionally we are able to estimate a minimum lifespan  of the solution  from the Lipshitz constant
\[
T  =  \frac{9}{100\|u_0\|_{C^1}}.
\] 
To construct the solution, we will also need to define a diffeomorphism from the ODE's. We will find that a sufficient condition to guarantee that the aforementioned diffeomorphism is invertible by restricting $|t|<T$.

Combining these estimates, we will find that the solution satisfies the size estimate
$$
\sup_{t\in [-T,T]}  \|u(t)\|_{C^1} \le 2 \|u_0\|_{C^1}.
$$
 We shall also show the following estimate on the data--to--solution map.
\begin{theorem} 
\label{holder}
If $u_0\in C^1$, then the data-to-solution map is H\"older continuous from $C^\alpha$ to $C([-T,T];C^\alpha)$, where $0\le\alpha <1$. 
\end{theorem}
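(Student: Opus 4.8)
The plan is to obtain the H\"older bound by interpolating a Lipschitz estimate in the weak norm $C^0$ against the a priori boundedness of solutions in the strong norm $C^1$ supplied by Theorem~\ref{wp}. Fix data $u_0,\tilde u_0\in C^1$, let $u,\tilde u$ be the corresponding solutions and set $v=u-\tilde u$, and put $R=\max\{\|u_0\|_{C^1},\|\tilde u_0\|_{C^1}\}$. By Theorem~\ref{wp} and its size estimate, both solutions exist on $[-T,T]$ with $T=9/(100R)$ and obey $\|u(t)\|_{C^1},\|\tilde u(t)\|_{C^1}\le 2R$.

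First I would establish the $C^0$-Lipschitz bound. Subtracting the two copies of \eqref{fwnl} and writing $uu_x-\tilde u\tilde u_x=uv_x+\tilde u_x v$ turns the difference into the transport equation
\[
v_t+\tfrac32 u\,v_x=-\tfrac32\tilde u_x\,v+\p_x(1-\p_x^2)^{-1}v,
\]
which involves only the single flow generated by $\tfrac32 u$, so that no comparison between two distinct diffeomorphisms is required. Since the kernel of $(1-\p_x^2)^{-1}$ is $\tfrac12 e^{-|\cdot|}$, the operator $\p_x(1-\p_x^2)^{-1}$ is convolution against $-\tfrac12\mathrm{sgn}(\cdot)e^{-|\cdot|}$, a kernel of unit $L^1$ norm, whence $\|\p_x(1-\p_x^2)^{-1}v\|_{C^0}\le\|v\|_{C^0}$ by Young's inequality. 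Integrating along the characteristics $\dot q=\tfrac32 u(q,t)$, taking absolute values, using $\|\tilde u_x(t)\|_{C^0}\le 2R$, passing to the supremum over characteristics, and invoking Gronwall's inequality gives
\[
\sup_{t\in[-T,T]}\|v(t)\|_{C^0}\le e^{(3R+1)T}\,\|u_0-\tilde u_0\|_{C^0}=:C(R)\,\|u_0-\tilde u_0\|_{C^0}.
\]

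Next I would combine this with the elementary interpolation inequality
\[
[f]_{C^\alpha}:=\sup_{x\neq y}\frac{|f(x)-f(y)|}{|x-y|^\alpha}\le 2^{1-\alpha}\,\|f'\|_{C^0}^{\alpha}\,\|f\|_{C^0}^{1-\alpha},\qquad 0\le\alpha<1,
\]
which follows at once from $|f(x)-f(y)|\le\min\{2\|f\|_{C^0},\,\|f'\|_{C^0}|x-y|\}$. Applying it to $f=v(t)$ and bounding $\|v_x(t)\|_{C^0}\le\|u_x(t)\|_{C^0}+\|\tilde u_x(t)\|_{C^0}\le 4R$ through the size estimate, then adding back the $C^0$ bound and using $\|u_0-\tilde u_0\|_{C^0}\le\|u_0-\tilde u_0\|_{C^\alpha}$, yields on any bounded subset of $C^\alpha$
\[
\sup_{t\in[-T,T]}\|u(t)-\tilde u(t)\|_{C^\alpha}\le C\,\|u_0-\tilde u_0\|_{C^\alpha}^{\,1-\alpha},
\]
with $C$ depending on $R$ and on the ambient bound; this is the asserted H\"older continuity with exponent $1-\alpha$.

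The principal obstacle is conceptual: the transport argument \emph{cannot} be closed at the level of $C^1$. Differentiating the equation for $v$ in $x$ produces the term $-\tfrac32\tilde u_{xx}\,v$, which demands a second derivative of $\tilde u$ that $C^1$ data simply does not provide, so no Gronwall estimate bounds $\|v_x\|_{C^0}$ in terms of $\|v\|_{C^1}$ alone. This forces the interpolation step and explains why the exponent $1-\alpha$ must degenerate to $0$ as $\alpha\uparrow 1$, in accordance with the map being merely continuous---not H\"older---in the top norm $C^1$. One should also verify that the constant may legitimately depend on the $C^1$ size $R$ even though distances are taken in the weaker $C^\alpha$ metric; this is harmless precisely because the hypothesis $u_0\in C^1$ is what furnishes the bounds $\|\tilde u_x\|_{C^0}\le 2R$ and $\|v_x\|_{C^0}\le 4R$ used above.
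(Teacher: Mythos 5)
Your proposal is correct and shares the paper's top-level architecture --- a Lipschitz estimate in the weak $C^0$ topology interpolated against the uniform $C^1$ bound $\|v(t)\|_{C^1}\le 4r$ via $|f(x)-f(y)|\le\min\{2\|f\|_{C^0},\|f'\|_{C^0}|x-y|\}$, which is exactly the inequality used in the paper's final lemma --- but you reach the key $C^0$-Lipschitz lemma by a genuinely different route. The paper stays inside its Lagrangian framework: it invokes the ODE theorem to get $\|w_\ee(t)-w(t)\|_{C^1}\le c\|u_{\ee,0}-u_0\|_{C^1}$, propagates this to $\eta_\ee-\eta$ by integrating in time, controls $\eta_\ee^{-1}-\eta^{-1}$ using the mean value theorem together with the lower bound $\p_x\eta_\ee\ge\frac{173}{200}$, and finally compares $w_\ee\circ\eta_\ee^{-1}$ with $w\circ\eta^{-1}$ by adding and subtracting $w_\ee\circ\eta^{-1}$. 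You instead work in Eulerian variables: subtracting the two copies of \eqref{fwnl} and writing $uu_x-\tilde u\tilde u_x=uv_x+\tilde u_x v$ gives a linear transport equation for $v$ along the single flow $\dot q=\frac32u(q,t)$, and since $u,\tilde u\in C([-T,T];C^1)\cap C^1([-T,T];C)$ are classical solutions, differentiation along characteristics and Gronwall are legitimate; the bound $\|\p_x(1-\p_x^2)^{-1}v\|_{C^0}\le\|v\|_{C^0}$ from the unit-$L^1$-norm kernel $-\frac12\,\mathrm{sgn}(\cdot)e^{-|\cdot|}$ is correct. Your approach is shorter and avoids the comparison of two distinct inverse diffeomorphisms, at the cost of presupposing the existence, regularity, and size estimates from Theorem \ref{wp}; the paper's approach is longer but reuses machinery it has already built and never needs to manipulate the PDE for the difference. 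Your closing observation --- that the argument cannot close in $C^1$ because the differentiated equation produces $\tilde u_{xx}v$, and that this is why the exponent $1-\alpha$ degenerates as $\alpha\uparrow1$ --- is a correct and worthwhile explanation of a point the paper leaves implicit. One small bookkeeping remark: to conclude $\|v(t)\|_{C^{0,\alpha}}\lesssim\|u_0-\tilde u_0\|_{C^{0,\alpha}}^{1-\alpha}$ for the full norm (sup norm plus seminorm) you should, as you do, restrict to a bounded set so that $\|u_0-\tilde u_0\|_{C^\alpha}\le\|u_0-\tilde u_0\|_{C^\alpha}^{1-\alpha}\cdot(\text{const})$; the paper's statement is for the seminorm and elides this.
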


The proof of Theorem \ref{wp} will be organized as follows.  First we write the FW equation down as a system of ordinary differential equations (ODEs), which we can solve via the ODE theorem. Then, we show that we can construct a function $u \in C([-T,T]; C^1)$ such that $u$ is the unique solution to the FW equation.  

We let $C^k( \rr)$, $k = 0,1,2,\dots$ be the Banach space of $k$ continuously differentiable functions, which are bounded, equiped with the norm 
$$
\|f\|_{C^k} = \sum_{n=0}^k \sup_{x\in \rr} \left|\frac{d^n}{dx^n} f(x) \right|.
$$
Furthermore, it will be understood that
\[
C^k \doteq C^k(\rr)
\]
throughout the paper since all calculations will be over the real line.
Throughout the paper we will let $\text{Diff}^k$ denote the set of $k$-times continuously differentiable diffeomorphisms of the line $\rr$. 
 This set is a topological group under composition of diffeomorphisms.

In the remainder of this document, we will also use subscripts to denote derivatives applied before composition; i.e. the notation $ u_x(\eta(x,t),t) = (\p_x u)(\eta(x,t),t)$ and $\p_x u(\eta(x,t),t) = u_x(\eta(x,t),t) \p_x \eta(x,t)$. 

An outline of our paper is as follows.  In section two we construct our system of ODEs.  Then in section three we apply the ODE theorem to the aforementioned system.  In section four we construct our solution $u(x,t)$ and show that it solves the FW equation, which concludes the proof of Theorem \ref{wp}.  Finally, in section five we investigate H\"{o}lder continuity for the data-to-solution map.

%
%
%
%
\section{A Semi-linear System of ODE's}

We begin by showing how one formally constructs an equivelent ODE system to the FW equation. Assuming a solution, $u$, exists and is a $C^\infty$ solution of the FW initial value problem, we have our trajectories satisfy the ODE
\begin{equation*}
\begin{cases}
\eta_t(x,t)=\frac32u(\eta(x,t),t) \\ 
\eta(x,0)=x.
\end{cases}
\end{equation*}

Moreover, the above ODE has a unique solution $\eta(x,t)$ which is also continuously differentiable,
therefore, we may define
\begin{align}
&w(x,t) = u(\eta(x,t),t),\quad
v(x,t) = u_x(\eta(x,t),t)  ,\quad q(x,t) = \eta_x(x,t),
\end{align}
and we see that we may easily obtain $u(x,t)$ from the composition 
$$
u = w \circ \eta^{-1}.
$$
We will first find a system of equations satisfied by $w$, $v$ and $q$, and then show that this system of equations is indeed an ODE system, and therefore the solutions are uniquely defined. Using $w$, we will then construct $\eta$ and $u$ similarly to the above formal definitions. 

\textbf{Equation for w(x,t):} Differentiating $w$ with respect to $t$ we get
\begin{align*}
\partial_tw(x,t) &= \partial_tu(\eta(x,t),t) \\ 
&= u_t(\eta(x,t),t) + u_x(\eta(x,t),t)\frac \p{\p t}\eta(x,t) \\ 
&= u_t(\eta(x,t),t) + u_x(\eta(x,t),t)\frac32u(\eta(x,t),t)
\end{align*}

From the non-local form of the FW equation, this yields
\begin{equation}
\label{nl1}
\partial_tw(x,t) = \partial_x(1-\partial^2_x)^{-1}(u)(\eta(x,t),t)
\end{equation}
To evaluate the above equation, we write the non-local form as an integral.  Indeed we have that
$(1-\partial^2_x)^{-1}f=\frac12e^{-|x|}*f$.  Therefore,
\begin{equation}
\label{nl2}
\partial_x(1-\partial^2_x)^{-1}(u) =\frac12 \int_{\mathbb{R}} \partial_xe^{-|x-y|}(u(y,t))dy.
\end{equation}
\label{nl3}
We split the exponential into two pieces to apply the derivative operator and obtain
\begin{equation}
\label{nl4}
\partial_x(1-\partial^2_x)^{-1}(u) =\frac12 \int_{x}^{\infty}e^{-|x-y|}(u(y,t))dy - \frac12\int_{-\infty}^{x}e^{-|x-y|}(u(y,t))dy
\end{equation}
Now we evaluate at $x = \eta(x,t)$ to find
\begin{align}
\label{nl5}
(\partial_x(1-\partial^2_x)^{-1}(u))(\eta(x,t),t) = &\frac12 \int_{\eta(x,t)}^{\infty}e^{-|\eta(x,t)-y|}(u(y,t))dy \nonumber \\ 
&- \frac12\int_{-\infty}^{\eta(x,t)}e^{-|\eta(x,t)-y|}(u(y,t))dy
\end{align}
We note that our integration is in respect to $y$. Applying a change of variables $y=\eta(z,t)$ yields
\begin{align}
\label{nl6}
(\partial_x(1-\partial^2_x)^{-1}(u))(\eta(x,t),t) =&\frac12 \int_{\eta(x,t)}^{\infty}e^{-|\eta(x,t)-\eta(z,t)|}(u(\eta(z,t),t))\eta_z(z,t)dz \nonumber
\\ &- \frac12\int_{-\infty}^{\eta(x,t)}e^{-|\eta(x,t)-\eta(z,t)|}(u(\eta(z,t),t))\eta_z(z,t)dz
\end{align}
Applying another change of variables $u(\eta(z,t),t) = w(z,t)$ and $q(z,t) = \eta_z(z,t)$ we obtain
\begin{align}
\label{nl7}
(\partial_x(1-\partial^2_x)^{-1}(u))(\eta(x,t),t) = &\frac12 \int_{x}^{\infty}e^{-|\eta(x,t)-\eta(z,t)|}w(z,t)q(z,t)dz \nonumber \\
&- \frac12\int_{-\infty}^{x}e^{-|\eta(x,t)-\eta(z,t)|}w(z,t)q(z,t)dz
\end{align}
Using the definition of $q(x,t)$ implies
\begin{equation}
\eta(z,t)-\eta(x,t) = \int_{x}^{z}q(y,t)dy,
\end{equation}
we find
\begin{align}
(\partial_x(1-\partial^2_x)^{-1}(u))(\eta(x,t),t) = &\frac12 \int_{x}^{\infty}e^{-|\int_{x}^{z}q(y,t)dy|}w(z,t)q(z,t)dz \nonumber \\ 
 &- \frac12\int_{-\infty}^{x}e^{-|\int_{x}^{z}q(y,t)dy|}w(z,t)q(z,t)dz.
\end{align}
For simplicity, we will redefine this equation as $P_1(w,q)$
\begin{equation}
P_1(w,q) = \frac12 \int_{x}^{\infty}e^{-|\int_{x}^{z}q(y,t)dy|}w(z,t)q(z,t)dz - \frac12\int_{-\infty}^{x}e^{-|\int_{x}^{z}q(y,t)dy|}w(z,t)q(z,t)dz.
\end{equation}
\textbf{Equation for q(x,t):} Next, we find $\partial_tq(x,t)$ satisfies
\begin{align}
\partial_tq(x,t) &= \partial_t\eta_x(x,t) \nonumber \\ 
&= \partial_x\eta_t(x,t) \nonumber \\ 
&=\frac32u_x(\eta(x,t),t)(\eta_x(x,t)).
\end{align}
So we obtain
\begin{equation}
\label{qeqn}
\partial_tq(x,t) = \frac32v(x,t)q(x,t)
\end{equation}
\textbf{Equation for v(x,t):} Finally, we determine an equation for $\partial_tv(x,t)$.  By the definition of $v$ we have
\begin{align}
\label{veqn}
\partial_tv(x,t) &= \partial_tu_x(\eta(x,t),t) \nonumber \\ 
&= u_{xt}(\eta(x,t),t)+u_{xx}(\eta(x,t),t)\eta_t(x,t) \nonumber  \\ 
& = u_{xt}(\eta(x,t),t)+u_{xx}(\eta(x,t),t)\frac32u(\eta(x,t),t)
\end{align}
By taking a spatial derivative of \eqref{fwnl}, we find
\[
u_{xt}+\frac32u_x^2+\frac32uu_{xx} = \p_x^2(1-\p_x)^{-1}(u).
\]
Noting that $\p_x^2(1-\p_x^2)^{-1}f = (1-\p_x^2)^{-1}f - f$, we have
\[
u_{xt}+\frac32u_x^2+\frac32uu_{xx} = (1-\p_x)^{-1}(u) - u.
\]
Plugging back into \eqref{veqn} yields
\begin{equation*}
\partial_tv(x,t) = \frac12\int_{\mathbb{R}}e^{-|\int_{x}^{z}q(y,t)dy|}w(z,t)q(z,t)dz-w-\frac32v^2.
\end{equation*}
Now let the integral expression be replaced with $P_2(w,q)$ and this yields
\begin{equation}
\label{veq}
\partial_tv(x,t) = P_2(w,q)-w - \frac32v^2 .
\end{equation}
Therefore, the initial value problem for the FW equation is formally equivalent to the following system of ordinary differential equations
\begin{equation}
\begin{cases}
\partial_tw = P_1(w,q), \\ \partial_tv = P_2(w,q)-w - \frac32v^2, \\ \partial_tq = \frac32vq
\end{cases}
\end{equation}
with initial data
\begin{equation}
\begin{cases}
w(x,0) = u_0(x), \\ v(x,0) = \partial_xu_0(x), \\ q(x,0)=1.
\end{cases}
\end{equation}
Next is to show that this system is an ODE in the space $C^1 \times C \times C$.

%
%
%
%
\section{Application of the ODE Theorem}
We now show that the aforementioned system of equations are ODE's in an appropriate Banach space. We define
\[
Y=\{(f,g,h) \in C^1\times C \times C : \|(f,g,h)\|_Y = \|f \|_{C^1} + \| g \|_C + \| h \|_C < \infty\}
\]
and we claim the above system is an ODE. To verify this claim, we rewrite the system of equations in the following way:
\begin{equation}
\label{ode}
\begin{cases}
     \frac{d}{dt}y = f(t,y)
     \\
     y(0) = y_0
\end{cases}
\end{equation}
where $y= (w,v,q), y(0) = \left(u_0(x), \frac{d}{dx}u_0(x), 1\right)$ and $f = \left(P_1(w,q), P_2(w,q)-w - \frac32v^2, \frac{3}{2}vq\right)$

We must show $y \rightarrow f(y)$ is Lipschitz continuous in a neighborhood of $y_0$.

\begin{lemma}
\label{lipex}
Let $B_{r_0} \subset Y$ be a small ball centered at $y_0 \in Y$ with a radius $r_0 < \frac{1}{9}$. Then the mapping $y \rightarrow f(t,y)$ is Lipschitz continuous from $B_{r_0}$ to $Y$; i.e. for any $y_1, y_2 \in B_{r_0} \subset Y$ with $\|y_i \|_Y < r \doteq r_0 + \| y_0 \|_Y, \ i = 1,2,$ there exists a constant c independent of r, such that
\[
\|f(y_1) - f(y_2) \|_Y \le \frac{50}{9}r\| y_1 - y_2 \|_Y.
\]
\end{lemma}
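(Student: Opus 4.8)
The plan is to control $\|f(y_1)-f(y_2)\|_Y$ componentwise via the triangle inequality, writing it as the sum of the $C^1$-norm of the first ($P_1$) component and the $C$-norms of the second and third components. The third component $\frac32(v_1q_1-v_2q_2)$ and the polynomial part $-w-\frac32v^2$ of the second are local, and only require the product form of the Lipschitz estimate, e.g. $\|v_1q_1-v_2q_2\|_C\le\|v_1\|_C\|q_1-q_2\|_C+\|q_2\|_C\|v_1-v_2\|_C$ and $\|v_1^2-v_2^2\|_C\le(\|v_1\|_C+\|v_2\|_C)\|v_1-v_2\|_C$. The real work is in the two nonlocal operators $P_1$ and $P_2$, whose kernel $e^{-|\int_x^z q\,dy|}$ depends nonlinearly on $q$.

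Before estimating, I would record the two structural facts that make the constant come out linear in $r$. Since every $y_i\in B_{r_0}$ has $\|q_i-1\|_C<r_0<\frac19$, we have the two-sided pointwise bound $\frac89<q_i<\frac{10}{9}$. The lower bound yields the decay $e^{-|\int_x^z q_i\,dy|}\le e^{-\frac89|z-x|}$, which is what makes every integral converge; the upper bound shows $\|q_i\|_C$ is of order one rather than order $r$, which is exactly what prevents quadratic-in-$r$ terms from surviving. Moreover $r=r_0+\|y_0\|_Y\ge\|q(\cdot,0)\|_C=1$, so any $O(1)$ constant may be absorbed into an $O(r)$ bound.

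For the nonlocal differences I would telescope the integrand as
\[
K_1w_1q_1-K_2w_2q_2=(K_1-K_2)w_1q_1+K_2w_1(q_1-q_2)+K_2q_2(w_1-w_2),
\]
where $K_i=e^{-|\int_x^z q_i\,dy|}$. The kernel difference is handled by the fact that $t\mapsto e^{-|t|}$ is $1$-Lipschitz, together with the mean value theorem: since both exponents dominate $\frac89|z-x|$, one obtains $|K_1-K_2|\le|z-x|\,e^{-\frac89|z-x|}\|q_1-q_2\|_C$, so the surviving integrals reduce to the elementary $\int_\rr e^{-\frac89|s|}\,ds=\frac94$ and $\int_\rr|s|e^{-\frac89|s|}\,ds=\frac{81}{32}$. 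To handle the $C^1$ part of the $P_1$ component I would differentiate under the integral sign: the boundary contributions at $z=x$ collapse to $-wq$, and differentiating the exponent pulls out a factor $q(x)$, giving the clean identity $\partial_x P_1=q(P_2-w)$. This reduces the derivative estimate to the $P_2$ estimate already in hand plus one further application of the product rule.

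I expect the kernel-difference bound to be the main obstacle: it is where the lower bound $q>\frac89$ is essential (to produce the exponential decay that defeats the polynomial factor $|z-x|$ from the Lipschitz estimate) and where one must resist bounding $\|q_i\|_C$ by $r$. Assembling the three telescoped pieces for $P_1$ and $P_2$, the derivative identity $\partial_x P_1=q(P_2-w)$, and the two local components, then replacing each explicit numerical constant and each $O(1)$ factor by the larger quantity $\frac{50}{9}r$ (legitimate since $r\ge1$), should yield the stated bound $\|f(y_1)-f(y_2)\|_Y\le\frac{50}{9}r\|y_1-y_2\|_Y$.
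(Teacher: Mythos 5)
Your proposal is correct and follows the same overall architecture as the paper: split the $Y$-norm into the three components, handle the local terms $\tfrac32 vq$, $w$, $\tfrac32 v^2$ by the product rule for differences, telescope the nonlocal integrands into a kernel difference plus two product differences, and exploit the two-sided bound on $q$ forced by $r_0<\tfrac19$ (exponential decay from the lower bound, boundedness from the upper bound) together with $r\ge\|y_0\|_Y\ge1$. Two of your sub-steps are genuinely different from, and cleaner than, the paper's. For the kernel difference the paper writes $e^{-|a|}-e^{-|b|}\le e^{-|b|}(e^{|a-b|}-1)$, expands $e^z-1$ as a power series, and absorbs the resulting polynomial factor via the inequality $|x-z|<e^{\frac12 r_\ell|x-z|}$ under the extra hypothesis $r_0<\frac18 r_\ell$; your one-line mean value theorem bound $|K_1-K_2|\le|z-x|\,e^{-\frac89|z-x|}\|q_1-q_2\|_C$ followed by the explicit integrals $\int_\rr e^{-\frac89|s|}ds=\frac94$ and $\int_\rr|s|e^{-\frac89|s|}ds=\frac{81}{32}$ reaches the same conclusion with less machinery and a sharper constant. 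For the $C^1$ part of the $P_1$ component, the paper differentiates each telescoped piece separately via the fundamental theorem of calculus and the chain rule; your identity $\partial_x P_1=q(P_2-w)$ is correct (the boundary terms at $z=x$ from the two half-line integrals each contribute $-\tfrac12 wq$, and differentiating the exponent pulls out $q(x)$ times the full-line integral) and packages the same computation so that the derivative estimate reduces to the already-proved $P_2$ estimate plus one product rule; this also mirrors the operator identity $\partial_x^2(1-\partial_x^2)^{-1}=(1-\partial_x^2)^{-1}-1$. One caveat applies equally to you and to the paper: neither argument tracks the constants carefully enough to certify the specific value $\tfrac{50}{9}$ --- summing the paper's own intermediate bounds on $E_1$, $E_2$, $E_3$ gives a constant noticeably larger than $\tfrac{50}{9}$, and your closing sentence waves at the final constant in the same way. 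Since only the existence of some Lipschitz constant of order $r$ is needed for the ODE theorem (the value merely rescales the lifespan $T$), this is a cosmetic defect rather than a gap.
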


\begin{proof} We show, for $y_1= (w_1,v_1,q_1)$ and $y_2 = (w_2,v_2,q_2)$, in $B_{r_0}$, there exists a $L=L(r)$ such that
\[
 \|f(t, y_1) - f(t, y_2) \|_Y \le L \|y_1-y_2\|_Y = L \|w_1-w_2\|_{C^1} + L\|v_1-v_2\|_C + L\|q_1-q_2\|_C.
\]
Define $r_l = 1-r_0 < 1+r_0 = r_u$ (the upper and lower bounds of $q_1, q_2$). To show the above inequality, we split the estimate into three components, which are the three pieces of the $Y$ norm.
\begin{align}
\label{estimates}
\| f(t,y_1)-f(t,y_2)\|_Y &= \| P_1(w_1,q_1) -P_1(w_2,q_2) \|_{C^1} \nonumber \\ 
&+ \left\| P_2(w_1,q_1)-w_1 - \frac32v_1^2 -P_2(w_2,q_2)+w_2 +\frac32v_2^2 \right\|_C  \nonumber \\ 
&+ \left\| \frac{3}{2}v_1q_1 - \frac{3}{2}v_2q_2 \right\|_C \nonumber \\ 
&= E_1 + E_2 + E_3.
\end{align}

\subsection{Estimating $E_3$}
Here we start with the estimate for the third term since it's the easiest.
\[
\left\|\frac32v_1q_1-\frac32v_2q_2\right\|_C = \sup_{x\in\mathbb{R}}\frac32|v_1q_1-v_2q_2|
\]
We add and subtract $v_1q_2$ and apply the triangle inequality to obtain
\[
\left\|\frac32v_1q_1-\frac32v_2q_2\right\|_C \le  \sup_{x\in\mathbb{R}}\left(\frac32|v_1q_1 - v_1q_2| + \frac32|v_1q_2 - v_2q_2|\right)
\]
By assumption, $|v_1| \le r$, therefore
\[
\left\|\frac32v_1q_1-\frac32v_2q_2\right\|_C \le \sup_{x\in\mathbb{R}}\left(\frac32r|q_1-q_2| + \frac32|v_1q_2 - v_2q_2|\right)
\]
Using the functions that are bounded by $r$, ($q_1, q_2$ are bounded by $r_u$) we obtain
\[
\left\|\frac32v_1q_1-\frac32v_2q_2\right\|_C \le \sup_{x\in\mathbb{R}}\left(\frac32r|q_1-q_2| + \frac32r_u|v_1-v_2|\right)
\]
Hence,
\begin{equation}
\label{est-e3}
\left\|\frac32v_1q_1-\frac32v_2q_2\right\|_C \le \frac32r\left\|y_1-y_2\right\|_Y.
\end{equation}

\subsection{Estimating $E_2$} We proceed with the estimate for $E_2$ and shall do this in stages.  By triangle inequality we have that
\[
E_2 \le \|w_1-w_2\|_C + \left\|\frac32v_1^2-\frac32v_2^2\right\|_C +\|P_2(w_1,q_1) - P_2(w_2,q_2)\|_C
\]
The first two terms can be estimated in the same manner as we did for the estimate $E_3$.  Indeed, we have that
\[
\|w_1-w_2\|_C \le \|y_1-y_2\|_Y \ \ \text{and} \ \ \left\|\frac32v_1^2-\frac32v_2^2\right\|_C \le 3r\|y_1-y_2\|_Y.
\]
We now estimate $\|P_2(w_1,q_1) - P_2(w_2,q_2)\|_C$ which we will define as .  We have that
\[
\|P_2(w_1,q_1) - P_2(w_2,q_2)\|_C = \sup_{x\in\mathbb{R}} \left|\frac12\int_{\mathbb{R}} e^{-|\int_x^zq_1(y,t)dy|}w_1q_1-e^{-|\int_x^zq_2(y,t)dy|}w_2q_2dz\right|
\]
We add and subtract $e^{-|\int_x^zq_1(y,t)dy|}w_1q_2$, and then apply the triangle inequality to obtain
\begin{align}
\label{p2-est}
\|P_2(w_1,q_1) - P_2(w_2,q_2)\|_C \le &\sup_{x\in\mathbb{R}} \left|\int_{\mathbb{R}} e^{-|\int_x^zq_1(y,t)dy|}w_1q_1-e^{-|\int_x^zq_1(y,t)dy|}w_1q_2dz\right| \nonumber \\
&+  \sup_{x\in\mathbb{R}} \left|\int_{\mathbb{R}} e^{-|\int_x^zq_1(y,t)dy|}w_1q_2-e^{-|\int_x^zq_2(y,t)dy|}w_2q_2dz\right|
\end{align}
We take the supremum of the terms in the first integral to obtain
\begin{align*}
\|P_2(w_1,q_1) - P_2(w_2,q_2)\|_C &\le  \sup_{x\in\mathbb{R}}r \left|\int_{\mathbb{R}} e^{-r_l|x-z|}q_1-e^{-r_l|x-z|}q_2dz\right| \\
&+ \sup_{x\in\mathbb{R}}\left|\int_{\mathbb{R}} e^{-|\int_x^zq_1(y,t)dy|}w_1q_2-e^{-|\int_x^zq_2(y,t)dy|}w_2q_2dz\right| \\ 
 &=  \sup_{x\in\mathbb{R}}r\left|e^{-r_l|x|}*(q_1-q_2)\right| \\
&+ \sup_{x\in\mathbb{R}}\left|\int_{\mathbb{R}} e^{-|\int_x^zq_1(y,t)dy|}w_1q_2-e^{-|\int_x^zq_2(y,t)dy|}w_2q_2dz\right|
\end{align*}
where we used $r_l = 1-r_0 < |q| < 1+r_0 = r_u.$ By Young's inequality, we have
\[
\|P_2(w_1,q_1) - P_2(w_2,q_2)\|_C \le \sup_{x\in\mathbb{R}}\frac{2r}{r_l}|q_1-q_2| + \sup_{x\in\mathbb{R}}\left|\int_{\mathbb{R}} e^{-|\int_x^zq_1(y,t)dy|}w_1q_2-e^{-|\int_x^zq_2(y,t)dy|}w_2q_2dz\right|.
\]
Now for the second term we add and subtract $e^{-|\int_x^zq_1(y,t)dy|}w_2q_2$ and apply triangle inequality to obtain
\begin{align*}
\|P_2(w_1,q_1) - P_2(w_2,q_2)\|_C &\le \sup_{x\in\mathbb{R}}\frac{2r}{r_l}|q_1-q_2| + \sup_{x\in\mathbb{R}}\left|\int_{\mathbb{R}} e^{-|\int_x^zq_1(y,t)dy|}w_1q_2-e^{-|\int_x^zq_1(y,t)dy|}w_2q_2dz\right| \\
&+\sup_{x\in\mathbb{R}}\left|\int_{\mathbb{R}} e^{-|\int_x^zq_1(y,t)dy|}w_2q_2-e^{-|\int_x^zq_2(y,t)dy|}w_2q_2dz\right|
\end{align*}
For the first integral we take the supremum over terms and apply Young's inequality to find
\begin{align*}
\sup_{x\in\mathbb{R}}\left|\int_{\mathbb{R}} e^{-|\int_x^zq_1(y,t)dy|}w_1q_2-e^{-|\int_x^zq_1(y,t)dy|}w_2q_2dz\right| &\leq  \sup_{x\in\mathbb{R}}r_u\left|e^{-r_l|x|}*(w_1-w_2)\right| \\ 
& \leq \sup_{x\in\mathbb{R}}\frac{2r_u}{r_l}|w_1-w_2|.
\end{align*}
For the second integral we also take the supremum over terms and obtain
\[
\sup_{x\in\mathbb{R}}\left|\int_{\mathbb{R}} e^{-|\int_x^zq_1(y,t)dy|}w_2q_2-e^{-|\int_x^zq_2(y,t)dy|}w_2q_2dz\right| \leq rr_u\sup_{x\in\mathbb{R}}\left|\int_{\mathbb{R}} e^{-|\int_x^zq_1(y,t)dy|}-e^{-|\int_x^zq_2(y,t)dy|}dz\right|.
\]
We now estimate the remaining integral. 
By the estimate, $e^{-|x|} - e^{-|y|} =
e^{-|y|}(e^{-|x|+|y| }-1) \le e^{-|y|}(e^{|x-y| }-1)$ we have 
\begin{align*}
J = J(x,z) & =
e^{-|\int_x^z q_1(y,t) dy|}-e^{-|\int_x^z q_2(y,t) dy|} \le
e^{-|\int_x^z q_2(y,t) dy|} ( e^{|\int_x^z (q_1-q_2)(y,t) dy|}-1).
\end{align*}
Using the expansion $e^z - 1 = \sum_{n=1}^\infty \frac{z^n}{n!}$ and Minkowski's inequality to push the absolute values inside the integration, we obtain
\[
J 
 \le 
e^{-|\int_x^z q_2(y,t) dy|} \int_x^z |(q_1-q_2)(y,t)| dy \sum_{n=1}^\infty \frac{(\int_x^z |(q_1-q_2)(y,t) |dy)^{n-1}}{n!}.
\]
Next, we take the supremum over  $x$ of   $q_1 - q_2$, and  we have 
\[
J 
 \le 
e^{-|\int_x^z q_2(y,t) dy|}\|q_1 - q_2\|_{C} ||x-z|\sum_{n=1}^\infty \frac{(\int_x^z |(q_1-q_2)(y,t) |dy)^{n-1}}{n!}.
\]
Now we apply the boundedness of $q_j$ to obtain 
\begin{align*}
J 
& \le 
e^{-r_\ell |x-z|}\|q_1 - q_2\|_{C} |x-z|\sum_{n=1}^\infty \frac{((r_u - r_\ell) |x-z|)^{n-1}}{n!}
 \end{align*} 
Using $|x-z|  < e^{\frac12r_\ell |x-z|}  $  and $r_u - r_\ell = (1+r_0) - (1-r_0) = 2r_0$ and the assumption $r_0< \frac18 r_\ell $ (radius of the ball $\mathcal B_{r_0}$), we obtain 
\begin{align*}
J 
& \le 
e^{-r_\ell |x-z|}\|q_1 - q_2\|_{C}  e^{\frac12r_\ell |x-z|}           e^{\frac14 r_\ell |x-z|} 
  \le 
e^{-\frac14r_\ell |x-z|}\|q_1 - q_2\|_{C} .
 \end{align*}
Hence, for all $x \in \rr$, we obtain 
\[
\left|\int_\rr J (x,z) dz \right| (x) \le \|q_1 - q_2\|_{C}  \int_\rr e^{-\frac14r_\ell |x-z|} dz  = \frac{8}{r_\ell}\|q_1 - q_2\|_{C}   , 
\]
for all $x$. 

Inserting the above estimates back into \eqref{p2-est}, we obtain
\begin{align}
\|P_2(w_1,q_1) - P_2(w_2,q_2)\|_C &\leq \frac{2r}{r_l}\|q_1-q_2\|_C+\frac{2r_u}{r_l}\|w_1-w_2\|_{C}+\frac{8rr_u}{r_l}\|q_1+q_2\|_C \nonumber \\
& \leq \frac{8rr_u}{r_l}\|y_1-y_2\|_Y \nonumber \\
& \leq 10r\|y_1-y_2\|_Y.
\end{align}
Factoring in the $\frac12$ from the integral, we may sharpen this to
\begin{equation}
\|P_2(w_1,q_1) - P_2(w_2,q_2)\|_C\leq 5r\|y_1-y_2\|_Y.
\end{equation}

\subsection{Estimating $E_1$}
We now proceed in estimating $E_1$.  We find that
\begin{align}
\label{pest1-2}
P_1(w_1,q_1) - P_1(w_2,q_2) &= \frac12 \int_{x}^{\infty}e^{-|\int_{x}^{z}q_1(y,t)dy|}w_1(z,t)q_1(z,t)dz \nonumber \\ 
&- \frac12 \int_{x}^{\infty}e^{-|\int_{x}^{z}q_2(y,t)dy|}w_2(z,t)q_2(z,t)dz \nonumber \\ 
&- \frac12\int_{-\infty}^{x}e^{-|\int_{x}^{z}q_1(y,t)dy|}w_1(z,t)q_1(z,t)dz \nonumber \\ 
&+ \frac12\int_{-\infty}^{x}e^{-|\int_{x}^{z}q_2(y,t)dy|}w_2(z,t)q_2(z,t)dz.
\end{align}
We will estimate the difference in the first two integrals in \eqref{pest1-2} in the $C^1$ norm.  The estimates for the other two are similar.  Set
\begin{align}
I_1 &= \frac12 \int_{x}^{\infty}e^{-|\int_{x}^{z}q_1(y,t)dy|}w_1(z,t)q_1(z,t)dz \nonumber \\ 
&- \frac12 \int_{x}^{\infty}e^{-|\int_{x}^{z}q_2(y,t)dy|}w_2(z,t)q_2(z,t)dz.
\end{align}
By triangle inequality, we have that
\begin{align}
\label{i-ests}
\|I_1\|_{C^1} &\leq \left\|\frac12 \int_{x}^{\infty}e^{-|\int_{x}^{z}q_1(y,t)dy|}w_1(z,t)q_1(z,t)dz-\frac12 \int_{x}^{\infty}e^{-|\int_{x}^{z}q_1(y,t)dy|}w_1(z,t)q_2(z,t)dz\right\|_{C_1} \nonumber \\ 
&+\left\|\frac12 \int_{x}^{\infty}e^{-|\int_{x}^{z}q_1(y,t)dy|}w_1(z,t)q_2(z,t)dz-\frac12 \int_{x}^{\infty}e^{-|\int_{x}^{z}q_1(y,t)dy|}w_2(z,t)q_2(z,t)dz\right\|_{C_1} \nonumber \\ 
&+\left\|\frac12 \int_{x}^{\infty}e^{-|\int_{x}^{z}q_1(y,t)dy|}w_2(z,t)q_2(z,t)dz-\frac12 \int_{x}^{\infty}e^{-|\int_{x}^{z}q_2(y,t)dy|}w_2(z,t)q_2(z,t)dz\right\|_{C_1} \nonumber \\ 
& = \|I_{1,1}\|_{C^1}+\|I_{1,2}\|_{C^1}+\|I_{1,3}\|_{C^1}.
\end{align}
\subsubsection{Estimating $I_{1,1}$}  We must estimate both the sup norm of the difference in integrals and its derivative.  We find by previous estimates that 
\begin{equation}
\label{supi11}
\|I_{1,1}\|_{L^\infty} \leq r\|q_1 - q_2\|_{L^\infty}\int_x^\infty e^{-r_l|x-z|}dz = \frac{r}{r_l}\|q_1 - q_2\|_{L^\infty}.
\end{equation}
We then have by the Fundamental Theorem of Calculus and chain rule that
\begin{align}
\label{supci11}
|\p_xI_{1,1}| &\leq \int_x^\infty\left|e^{-|\int_{x}^{z}q_1(y,t)dy|}q_1(x)w_1(z)(q_1-q_2)(z)\right|dz \nonumber \\ 
& +\|w_1q_1-w_1q_2\|_{L^\infty} \nonumber \\ 
& \leq \left(\frac{rr_u}{r_l}+r\right)\|q_1 - q_2\|_{L^\infty}.
\end{align}
\subsubsection{Estimating $I_{1,2}$}  Using similar techniques to estimates for $I_{1,1}$, we find that
\begin{equation}
\label{supi12}
\|I_{1,2}\|_{L^\infty} \leq r_u\|w_1 - w_2\|_{L^\infty}\int_x^\infty e^{-r_l|x-z|}dz = \frac{r_u}{r_l}\|w_1 - w_2\|_{L^\infty}.
\end{equation}
Again, by the Fundamental Theorem of Calculus and chain rule we have that
\begin{align}
\label{supci12}
|\p_xI_{1,2}| &\leq \int_x^\infty\left|e^{-|\int_{x}^{z}q_1(y,t)dy|}q_1(x)q_2(z)(w_1-w_2)(z)\right|dz \nonumber \\ 
& +\|w_1q_2-w_2q_2\|_{L^\infty} \nonumber \\ 
& \leq \left(\frac{r_u^2}{r_l}+r_u\right)\|w_1 - w_2\|_{L^\infty}.
\end{align}
\subsubsection{Estimating $I_{1,3}$}  Using similar techniques to those found in the estimates for $E_2$, we may conclude that
\begin{equation}
\label{i13}
\|I_{1,3}\|_{C^1} \leq \frac{8rr_u^2}{r_l}\|q_1 - q_2\|_{L^\infty}.
\end{equation}
By collecting estimates on $I_{1,1}$ - $I_{1,3}$, we obtain
\begin{equation}
\label{i1-1}
\|I_1\|_{C^1} \leq \frac{8rr_u^2}{r_l}\|y_1 - y_2\|_{Y} \leq \frac{100}{9}r\|y_1 - y_2\|_{Y}.
\end{equation}
Factoring in the $\frac12$ in front of our integrals, we may sharpen this estimate to 
\begin{equation}
\label{i1-2}
\|I_1\|_{C^1}\leq \frac{50}{9}r\|y_1 - y_2\|_{Y}.
\end{equation}
The last two integrals under the $C^1$ norm in the difference between $P_1(w_1,q_1) - P_1(w_2,q_2)$ are estimated similarly.  Collecting all of our estimates on $E_1$ - $E_3$ concludes our proof.
\end{proof}
After applying the ODE theorem, we have that there exists a unique solution vector 
$$
\left(
\begin{array}{c}
w\\
v\\
q
\end{array}
\right)  \in C^1 ( [-T, T ] ; Y ), \ \ \ T=\frac{1}{2L} = \frac{9}{100r},
$$
where $L$ was given in the previous lemma;
i.e we have  solutions 
\begin{align} 
w \in C^1 ( [-T, T ] ; C^1 ), \quad v\in  C^1 ( [-T, T] ; C), \quad q\in  C^1 ( [-T, T ] ; C ).
\end{align}
In the next section, we will construct a diffeomorphism, labeled $\eta(x,t)$, using $w(x,t)$. Then, we can construct the solution $u(x,t)$ to the FW initial value problem.

%
%
%
%
\section{Construction of $u(x,t)$}
Let  $r$   be the  constant such that  $ \sup_{t\in [-T,T] } \| w(t)\| _{C^1} \le r$.  Next we define the function 
\begin{align}
\eta(x,t) \dot = x + \frac32\int_0^t w (x,\tau) d\tau.
\end{align}
We shall show that for $t \in [-T, T]$, $\eta(\cdot,t)$ is differentiable and $\eta_x >0$, which implies $\eta \in \text{Diff}^1$. 
\begin{lemma} For all $t \in [-T, T]$ fixed,  $\eta(\cdot,t) \in \text{Diff}^1$ and  $\p_t \eta(x,t) \in C^1(  [-T, T ] ; C^1 )$. 
\end{lemma}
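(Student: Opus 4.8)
The plan is to handle $\eta$ through its two partial derivatives separately: first establishing the spatial regularity and strict positivity of $\eta_x$, from which the diffeomorphism property follows, and then treating the time derivative, which turns out to be essentially immediate. Since the solution from the previous section satisfies $w \in C^1([-T,T];C^1)$, the map $\tau \mapsto w_x(\cdot,\tau)$ is continuous into $C$, so $(x,\tau)\mapsto w_x(x,\tau)$ is jointly continuous and uniformly bounded by $r$. I would use this to justify differentiating under the integral sign, obtaining
\[
\eta_x(x,t) = 1 + \frac32\int_0^t w_x(x,\tau)\,d\tau,
\]
which is continuous in $x$, so that $\eta(\cdot,t)\in C^1$. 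Using $\|w(\tau)\|_{C^1}\le r$ together with $|t|\le T=\frac{9}{100r}$, the key estimate is
\[
|\eta_x(x,t)-1| \le \frac32|t|\,r \le \frac32\cdot\frac{9}{100r}\cdot r = \frac{27}{200} < 1,
\]
which yields the uniform lower bound $\eta_x(x,t)\ge \frac{173}{200}>0$ for all $x\in\rr$ and $|t|\le T$.

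Next I would deduce the diffeomorphism property from this bound. Strict positivity of $\eta_x$ makes $\eta(\cdot,t)$ strictly increasing, hence injective. Integrating the lower bound gives $\eta(x,t)-\eta(0,t)\ge \frac{173}{200}x$ for $x>0$ and the reverse inequality for $x<0$, so $\eta(\cdot,t)\to\pm\infty$ as $x\to\pm\infty$; together with continuity this forces surjectivity onto $\rr$. Thus $\eta(\cdot,t)$ is a $C^1$ bijection of $\rr$ with nowhere-vanishing derivative, and the inverse function theorem supplies a $C^1$ inverse with $(\eta^{-1})_x = 1/\bigl(\eta_x\circ\eta^{-1}\bigr)$. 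Hence $\eta(\cdot,t)\in\text{Diff}^1$ for each fixed $t\in[-T,T]$.

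Finally, for the time derivative I would apply the fundamental theorem of calculus to the $\tau$-integral (valid since $w(x,\cdot)$ is continuous), giving the identity $\partial_t\eta(x,t) = \frac32 w(x,t)$. Because $w\in C^1([-T,T];C^1)$ was established in the previous section, it follows at once that $\partial_t\eta = \frac32 w \in C^1([-T,T];C^1)$, completing the claim. The main obstacle is really the quantitative positivity estimate: the entire argument hinges on the constant $\frac{27}{200}$ being strictly below $1$, which is precisely what the lifespan $T=\frac{9}{100r}$ is engineered to guarantee. Everything else—the joint continuity needed to differentiate under the integral, the growth-based surjectivity, and the inverse function theorem—is routine once that bound is secured.
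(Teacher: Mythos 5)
Your proposal is correct and follows essentially the same route as the paper: differentiate under the integral to get $\eta_x = 1 + \frac32\int_0^t w_x\,d\tau$, use $\|w(\tau)\|_{C^1}\le r$ and $|t|\le \frac{9}{100r}$ to obtain $\eta_x \ge \frac{173}{200}>0$, invoke the inverse function theorem, and read off $\partial_t\eta = \frac32 w$ directly. Your explicit surjectivity argument via the growth bound $\eta(x,t)-\eta(0,t)\ge\frac{173}{200}x$ is a detail the paper leaves implicit, while the paper additionally records quantitative two-sided bounds on $\partial_x\eta^{-1}$ that are used in later lemmas but are not needed for this statement.
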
 
\begin{proof}
First we observe that since   $w(x, \cdot) \in C^1$, $\eta(x, \cdot) -x \in C^1$. Thus, we may differentiate $\eta$ with respect to $x$ to obtain 
$$
\p_x \eta(x,t) = 1 + \frac32\int_0^t \p_x w(x,\tau ) d\tau. 
$$
Since $w  \in C^1(  [-T, T] ; C^1 )$ is in the ball of radius $r$, we know $ |  w(x,\tau ) |,| \p_x w(x,\tau ) |\le r$. Hence
$$
1 -\frac{3}{2}rt \le \partial_x\eta(x,t) \le 1 + \frac{3}{2}rt.
$$
Thus, for all $t \in [-T,T]$ such that $t < \frac9{ 100r}$, $\p_x\eta(x,t) \ge \frac{173}{200}>0$.
Next we consider $\eta^{-1} (x,t)$. It satisfies 
$$
\eta^{-1} (x,t) = x -\frac32\int_0^t w( \eta^{-1} (x,\tau)  ,\tau ) d\tau 
$$ 
By the inverse function theorem
$$
\p_x \eta^{-1} (x,t) =1  -\frac32\int_0^t  \p_x w( \eta^{-1} (x,\tau)  ,\tau )  \frac{1} { \p_x \eta(\eta^{-1} (x,\tau) ,\tau) } d\tau .
$$ 
Thus, we obtain 
$$
 1 - \frac{3rt}{2+3rt}  \le \p_x \eta^{-1} (x,t) \le 1 + \frac{3rt}{2-3rt}  ,
$$ 
using $t<\frac9{100r}$ we have 
$$
 \frac{200}{227} = 1 - \frac{\frac{27}{100} }{2+\frac{27}{100} }  \le \p_x \eta^{-1} (x,t) \le 1 + \frac{\frac{27}{100} }{2-\frac{27}{100} }  = \frac{200}{173} .
$$ 
Hence, $\eta(x, \cdot) \in \text{Diff}^1$. For the second claim, we differentiate $\eta$ with respect to $t$ and obtain 
$$
\p_t \eta(x,t) =  \frac32w(x,t) \in C^1(  [-T, T ] ; C^1 ),
$$
which completes the claim. 
\end{proof} 
Using $\eta^{-1}$ we define the function
 $$
u(x,t) \dot = w ( \eta^{-1} (x,t) , t) 
.
$$
By construction, $u$ satisfies the FW equation.
In fact, we have from our construction that
\begin{align*}
\p_tu(x,t)&=\p_t[w(\eta^{-1}(x,t),t)] \\ 
&=\p_tw(\eta^{-1}(x,t),t)+\p_xw(\eta^{-1}(x,t),t)\p_t\eta^{-1}(x,t).
\end{align*}
Since we know that
\begin{align*}
\p_tw(\eta^{-1}(x,t),t)&=-\p_x(1-\p_x^2)^{-1}(u)
\end{align*}
and
\begin{align*}
&\p_xw(\eta^{-1}(x,t),t)=\p_xu(x,t) \\ 
&\p_t\eta^{-1}(x,t)=-\frac32\p_t\int_0^t w( \eta^{-1} (x,\tau)  ,\tau ) d\tau = -\frac32w(\eta^{-1}(x,t),t)=-\frac32u(x,t),
\end{align*}
we have that
\begin{align*}
\p_tu(x,t) & =-\p_x(1-\p_x^2)^{-1}(u)-\frac32u\p_xu
\end{align*}
Thus, $u(x,t)$ satisfies the FW equation.
It remains to check that $u \in C([-T, T]; C^1) \cap C^1 ( [-T, T]; C)$. 

\begin{lemma}
$u \in C([-T, T]; C^1) \cap C^1 ( [-T, T]; C)$. 
\end{lemma}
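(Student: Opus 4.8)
The plan is to verify the two memberships separately, reducing each to the regularity already obtained for the ODE triple $(w,v,q)\in C^1([-T,T];Y)$ together with the flow estimates of the previous lemma. Throughout I use the chain rule identities $u=w\circ\eta^{-1}$ and, since $\p_x w = vq$ and $\p_x\eta^{-1}=1/(q\circ\eta^{-1})$,
\[
\p_x u(x,t)=v(\eta^{-1}(x,t),t),
\]
so that the spatial derivative of $u$ is exactly the Lagrangian quantity $v$ read in Eulerian coordinates.

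First I would dispose of the fixed-time regularity and the $C([-T,T];C)$ continuity. For each fixed $t$, boundedness of $v(\cdot,t)$ and of $\p_x\eta^{-1}(\cdot,t)$ (the previous lemma gives $\tfrac{200}{227}\le\p_x\eta^{-1}\le\tfrac{200}{173}$), together with continuity of the composition of continuous maps, shows $u(\cdot,t)\in C^1$; tracking these bounds yields the advertised size estimate $\sup_t\|u(t)\|_{C^1}\le 2\|u_0\|_{C^1}$. For continuity of $t\mapsto u(\cdot,t)$ into $C$, write
\[
u(x,t)-u(x,t_0)=\big[w(\eta^{-1}(x,t),t)-w(\eta^{-1}(x,t),t_0)\big]+\big[w(\eta^{-1}(x,t),t_0)-w(\eta^{-1}(x,t_0),t_0)\big],
\]
bounding the first bracket by $\|w(\cdot,t)-w(\cdot,t_0)\|_C\to0$ (from $w\in C([-T,T];C^1)$) and the second by the Lipschitz bound $r$ on $w(\cdot,t_0)$ times $\sup_x|\eta^{-1}(x,t)-\eta^{-1}(x,t_0)|\le\tfrac32 r|t-t_0|$, the latter because the integrand defining $\eta^{-1}$ is bounded uniformly in $x$. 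Both bounds are uniform in $x$, giving $C$-norm continuity.

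Next the strong $t$-differentiability. The construction preceding the lemma already identifies the candidate derivative $\p_t u=-\p_x(1-\p_x^2)^{-1}u-\tfrac32 u\,\p_x u$; I would promote that formal computation to a genuine limit in $C$ by showing the difference quotient $h^{-1}(u(\cdot,t+h)-u(\cdot,t))$ converges uniformly, using the $C^1$-in-$t$ regularity of $(w,q)$ and the flow. The nonlocal term $\p_x(1-\p_x^2)^{-1}u$ is a bounded smoothing operator applied to $u$, so its continuity in $t$ into $C$ follows from the $C$-continuity of $u$ established above; the transport term $u\,\p_x u$ is continuous in $t$ into $C$ provided $\p_x u$ is, which is exactly the remaining point.

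The hard part — and the crux of both claims — is continuity of $t\mapsto\p_x u(\cdot,t)=v(\cdot,t)\circ\eta^{-1}(\cdot,t)$ in the $C$ norm. Decomposing as above isolates the term $v(\eta^{-1}(x,t),t_0)-v(\eta^{-1}(x,t_0),t_0)$, whose supremum over $x$ is governed by the modulus of continuity of $v(\cdot,t_0)$ evaluated on the displacement $|\eta^{-1}(x,t)-\eta^{-1}(x,t_0)|\le\tfrac32 r|t-t_0|$. This is the familiar phenomenon that the composition map $C^1\times\text{Diff}^1\to C^1$ loses a derivative: right composition is only continuous into $C$, while $v(\cdot,t_0)=\p_x u(\cdot,t_0)$ is a priori merely bounded and continuous. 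The natural route is to establish equicontinuity of the family $\{v(\cdot,t):t\in[-T,T]\}$ and invoke continuity of the composition operator restricted to an equicontinuous family; this is the step on which the whole argument turns, and it is where the smoothing structure of $P_1,P_2$ (and, if needed, the modulus of continuity carried by the data $u_0'$) must be exploited. Once $\p_x u\in C([-T,T];C)$ is secured, the $C([-T,T];C^1)$ membership and the continuity of the transport term both follow, completing both claims.
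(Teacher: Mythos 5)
Your overall architecture matches the paper's: establish the fixed-time bounds $|u|\le r$ and $|\p_x u|\le \frac{200}{173}r<2r$ from the two-sided bounds on $\p_x\eta$ and $\p_x\eta^{-1}$, obtain the size estimate, and then treat time-continuity and time-differentiability by decomposing the composition $w\circ\eta^{-1}$ (the paper differentiates $u_t = w_x(\eta^{-1},t)\,\eta^{-1}_t + w_t(\eta^{-1},t)$ directly rather than passing back through the PDE, but these are the same computation). Your treatment of $t\mapsto u(\cdot,t)$ into $C$ is in fact more careful than the paper's one-line assertion: splitting off $w(\eta^{-1}(x,t),t_0)-w(\eta^{-1}(x,t_0),t_0)$ and controlling it by the Lipschitz bound $r$ on $w(\cdot,t_0)$ times $\sup_x|\eta^{-1}(x,t)-\eta^{-1}(x,t_0)|\le\frac32 r|t-t_0|$ is exactly right and is uniform in $x$.

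The gap is that you stop at the step you yourself call the crux: continuity of $t\mapsto \p_x u(\cdot,t)=v(\eta^{-1}(\cdot,t),t)$ in the sup norm. You correctly diagnose that the problematic term is $v(\eta^{-1}(x,t),t_0)-v(\eta^{-1}(x,t_0),t_0)$, which is controlled only by the modulus of continuity of $v(\cdot,t_0)$, and you propose to close it by proving equicontinuity of $\{v(\cdot,t)\}$ — but you do not prove it, and it is not obtainable from the stated hypotheses: at $t=0$ one has $v(\cdot,0)=u_0'$, which for $u_0\in C^1(\rr)$ in the paper's sense is bounded and continuous but need not be uniformly continuous on the line (e.g.\ $u_0(x)=\int_0^x\sin(s^2)\,ds$), and the $v$-equation $\p_t v = P_2(w,q)-w-\frac32 v^2$ contains the non-smoothing terms $-w-\frac32 v^2$, so the ODE cannot manufacture equicontinuity that the data lack. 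A secondary point: your identity $\p_x u = v\circ\eta^{-1}$ rests on $\p_x w = vq$, which holds at $t=0$ and must be propagated by the ODE system before it can be used; the paper sidesteps this by working with $w_x$ directly. To be fair, the paper's own proof papers over the same composition-continuity issue with the sentence ``the functions on the right hand side are continuous in $t$,'' so you have located a genuine subtlety rather than invented one — but as written your argument is incomplete at precisely the point on which, as you say, the whole lemma turns, and the repair you sketch would require an additional hypothesis (uniform continuity of $u_0'$) not present in the theorem.
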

\begin{proof}
First we check that $u \in C([-T, T]; C^1)$. 
Notice, 
$t \mapsto u(x,t)$ is bounded and continuous, since $w ( \eta^{-1} (x,t) , t) $ is bounded and continuous in $t$, in particular, 
$$
\sup_{x\in \rr}  | w ( \eta^{-1} (x,t) , t) | = \sup_{x\in\rr} | w ( x  , t) | \le r. 
$$
Next we differentiate $u$ with respect to $x$, and we find 
$$
\p_x u(x,t) = w_x(\eta^{-1}(x,t) , t) \eta^{-1}_x(x,t).
$$
The functions on the right hand side are continuous in $t$. 
Using the inverse function theorem, we obtain 
$$
|\p_x u(x,t) |= |w_x(\eta^{-1}(x,t) , t)| \frac{1}{\eta_x(\eta^{-1}(x,t),t)} .
$$
Using $\frac{173}{200}  \le \p_x \eta(x,t) $, and $w\in C^1(  [-T, T] ; C^1 )$ is in the ball of radius $r$, we obtain
$$
|\p_x u(x,t) |\le\frac{200}{173} r < 2r .
$$
Taking the supremum over $t \in [-T, T]$, we obtain $u \in C([-T, T]; C^1) $ as well as the solution size estimate 
\begin{align}
\sup_{t\in [-T,T]} \| u(t) \|_{C^1} \le2 \| u_0\|_{C^1} . 
\end{align}

Next we show $u \in C^1 ( [-T, T]; C)$. To do so, we differentiate $u$ with respect to $t$ to obtain 
$$
u_t (x,t) = w_x(\eta^{-1}(x,t) , t) \eta^{-1}_t(x,t) + w_t(\eta^{-1}(x,t) , t) .
$$
We have 
$$
 w_t(\eta^{-1}(x,t) , t) \in C([-T, T]; C^1),  \quad\eta^{-1}_t(x,t)  = -\frac32w (\eta^{-1} (x,t),t) \in C([-T, T]; C^1), 
$$
 and  finally
$$
w_x(\eta^{-1}(x,t) , t)  \in C( [-T, T] ; C).
$$
Since the last term is only continuous in the spacial variable, we have $u  \in C^1 ( [-T, T]; C)$. 
\end{proof}

The next step in our proof is to show that $u$ is the unique solution in these spaces. 
\begin{lemma}
There is only one solution $u \in C([-T, T]; C^1)$ to the Cauchy problem for the FW equation which can be constructed as  above. 
\end{lemma}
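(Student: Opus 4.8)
The plan is to reduce the uniqueness of $u$ to the uniqueness of the solution $(w,v,q)$ of the system \eqref{ode}, which we already possess from Lemma \ref{lipex} together with the fundamental theorem of ODEs. The key observation is that the Lagrangian change of variables constructed in Section 2 can be run in reverse: any solution of the FW Cauchy problem lying in $C([-T,T];C^1)$ generates a solution of the ODE system in $Y$ with the prescribed initial data, and ODE uniqueness then forces that solution — and hence $u$ itself — to coincide with the one we have built.

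Concretely, I would suppose $\tilde u \in C([-T,T];C^1)$ is a solution of \eqref{fwnl} with $\tilde u(\cdot,0)=u_0$. Since $\tilde u(\cdot,t)$ is bounded and Lipschitz in $x$, uniformly for $t\in[-T,T]$, the characteristic ODE
\[
\tilde\eta_t(x,t)=\frac32\tilde u(\tilde\eta(x,t),t),\qquad \tilde\eta(x,0)=x,
\]
has a unique solution with $\tilde\eta(\cdot,t)\in\text{Diff}^1$. Setting $\tilde w=\tilde u\circ\tilde\eta$, $\tilde v=\tilde u_x\circ\tilde\eta$, and $\tilde q=\tilde\eta_x$, the computations of Section 2 show that $(\tilde w,\tilde v,\tilde q)$ satisfies \eqref{ode} with the same initial data $\left(u_0,\partial_x u_0,1\right)$. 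Using continuity in time together with the a priori size estimate and the diffeomorphism bounds established in Section 4, this triple remains in the ball $B_{r_0}$ throughout $[-T,T]$, so the Lipschitz estimate of Lemma \ref{lipex} applies and the ODE theorem yields $(\tilde w,\tilde v,\tilde q)=(w,v,q)$. In particular $\tilde w=w$, so that $\tilde\eta(x,t)=x+\frac32\int_0^t\tilde w(x,\tau)\,d\tau=\eta(x,t)$, and therefore $\tilde u=\tilde w\circ\tilde\eta^{-1}=w\circ\eta^{-1}=u$.

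The step requiring the most care — the main obstacle — is verifying that $(\tilde w,\tilde v,\tilde q)$ genuinely solves the $v$-equation of \eqref{ode}, since its derivation in Section 2 formally invoked $u_{xt}$ and $u_{xx}$, which need not exist classically for a solution merely of class $C^1$. The resolution is that these quantities enter only through the combination $u_{xt}+\frac32 u u_{xx}$, which the spatially differentiated nonlocal form \eqref{fwnl} identifies with $(1-\p_x^2)^{-1}u-u-\frac32 u_x^2$; this latter expression is continuous whenever $\tilde u\in C^1$, so no second derivatives of $\tilde u$ are ever needed and the $v$-equation holds in the integrated (Lagrangian) sense. A secondary point to check is that $(\tilde w,\tilde v,\tilde q)$ does not exit $B_{r_0}$ before time $T$; this can be handled by a standard continuation argument on the maximal subinterval of $[-T,T]$ on which the triple stays in the ball, showing it must exhaust the whole interval. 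Once these regularity and invariance points are secured, the remaining steps are the bookkeeping identities already recorded in Sections 2 through 4, and uniqueness follows.
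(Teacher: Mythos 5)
Your proposal is correct in outline but proves a genuinely stronger statement than the paper does, by a genuinely different route. The lemma as stated (and as proved in the paper) only asserts uniqueness of the solution \emph{produced by the construction}: the paper's proof is three lines long --- the ODE theorem gives a unique $w$, hence $\eta - x = \frac32\int_0^t w\,d\tau$ is uniquely determined, hence $u = w\circ\eta^{-1}$ is uniquely determined by uniqueness of inverses and compositions. It does not address whether some other $\tilde u \in C([-T,T];C^1)$, not arising from the construction, could also solve the Cauchy problem. Your argument attacks exactly that question by running the Lagrangian change of variables in reverse: any $C^1$-in-space solution generates its own characteristics, hence its own triple $(\tilde w,\tilde v,\tilde q)$ solving \eqref{ode} with the same data, and ODE uniqueness collapses it onto the constructed solution. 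What your approach buys is genuine uniqueness in the class $C([-T,T];C^1)$, which is the more natural well-posedness statement; what the paper's approach buys is brevity, at the cost of a weaker conclusion.

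Two steps in your argument are only gestured at and would need to be written out. First, the differentiability of $t\mapsto\tilde v(x,t)=\tilde u_x(\tilde\eta(x,t),t)$: you correctly identify that the combination $u_{xt}+\frac32uu_{xx}$ is the continuous quantity $(1-\p_x^2)^{-1}u-u-\frac32u_x^2$, but asserting this identity does not by itself show the composite is differentiable in $t$ when $\tilde u_{xx}$ does not exist; one needs a difference-quotient or mollification argument to justify that $\tilde v$ satisfies the integral form of the $v$-equation. Second, the continuation argument keeping $(\tilde w,\tilde v,\tilde q)$ in a region where Lemma \ref{lipex} applies (in particular keeping $\tilde q$ bounded away from zero) requires an a priori bound on the arbitrary solution $\tilde u$, not just on the constructed one; a Gronwall estimate on the maximal subinterval of agreement, or an open-and-closed argument using local ODE uniqueness, will close this, but it is not automatic. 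Neither issue is fatal --- this is the standard strategy in the Misio\l ek and Holmes--Thompson papers the authors cite --- but as written your proof is a correct plan for a stronger theorem rather than a complete proof of either statement.
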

\begin{proof}
It follows from the ODE theorem, that given initial data $u_0$, there exists a unique solution $w$ to the first equation in the ODE system. 
Given $w$, $\eta-x$ is uniquely defined, since 
$$
\eta - x = \frac32\int_0^t w d\tau \in C^1([-T,T]; C^1).
$$
By uniqueness of inverses and composition, it follows immediately that $u = w \circ \eta^{-1} $ is uniquely defined. 
\end{proof}
The final step in our proof, is  to show the data--to--solution map is continuous. 
\begin{lemma} 
The data--to--solution map $u_0\mapsto u$ from $C^1$ to $C([-T, T]; C^1)$, is continuous. 
\end{lemma}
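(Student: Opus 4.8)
The plan is to transport the continuous dependence that the ODE theorem already supplies at the level of the triple $(w,v,q)$ through the construction $\eta(x,t)=x+\frac32\int_0^t w\,d\tau$ and the formulas $u=w\circ\eta^{-1}$, $u_x=v\circ\eta^{-1}$. Fix $u_0\in C^1$ and let $u_{0,n}\to u_0$ in $C^1$; set $y_0^{(n)}=(u_{0,n},\partial_xu_{0,n},1)$ and $y_0=(u_0,\partial_xu_0,1)$, so $\|y_0^{(n)}-y_0\|_Y\to0$. Since the $y_0^{(n)}$ eventually lie in a fixed neighbourhood of $y_0$, I would enlarge the ball $B_{r_0}$ slightly to one containing $y_0$ and all $y_0^{(n)}$, on which Lemma \ref{lipex} furnishes a single Lipschitz constant $L$ and hence a common lifespan $T=\tfrac1{2L}$. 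Writing $(w_n,v_n,q_n)$ and $(w,v,q)$ for the two solutions, the standard Gronwall estimate applied to $\frac{d}{dt}(y_n-y)=f(y_n)-f(y)$ gives
\[
\sup_{t\in[-T,T]}\|(w_n,v_n,q_n)(t)-(w,v,q)(t)\|_Y\le e^{LT}\|y_0^{(n)}-y_0\|_Y\longrightarrow 0 .
\]
In particular $w_n\to w$ in $C([-T,T];C^1)$ and $v_n\to v$, $q_n\to q$ in $C([-T,T];C)$.

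Next I would propagate this to the flows. From $\eta_n-\eta=\frac32\int_0^t(w_n-w)\,d\tau$ one gets $\|\eta_n-\eta\|_{C([-T,T];C^1)}\le\frac32T\|w_n-w\|_{C([-T,T];C^1)}\to0$. Combining the identity $\eta(\eta_n^{-1})-\eta(\eta^{-1})=(\eta-\eta_n)(\eta_n^{-1})$ with the mean value theorem and the uniform lower bound $\partial_x\eta\ge\frac{173}{200}$ from the diffeomorphism lemma yields
\[
\|\eta_n^{-1}-\eta^{-1}\|_{C^0}\le\tfrac{200}{173}\|\eta_n-\eta\|_{C^0}\longrightarrow 0,
\]
and the two-sided bounds on $\partial_x\eta$ together with $\partial_x\eta^{-1}=1/(\partial_x\eta\circ\eta^{-1})$ give the companion statement for $\partial_x\eta_n^{-1}$. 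For the sup-norm of $u_n-u$ I would split $u_n-u=(w_n-w)\circ\eta_n^{-1}+(w\circ\eta_n^{-1}-w\circ\eta^{-1})$: the first term has $C^0$-norm equal to $\|w_n-w\|_{C^0}$ because composition with a diffeomorphism of $\rr$ preserves the supremum, and the second is at most $\|\partial_x w\|_{C^0}\,\|\eta_n^{-1}-\eta^{-1}\|_{C^0}\le r\|\eta_n^{-1}-\eta^{-1}\|_{C^0}$ since $w(\cdot,t)$ is Lipschitz with constant $r$; both tend to $0$.

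The hard part, and the step I expect to be the real obstacle, is the derivative. Writing $u_x=v\circ\eta^{-1}$ and splitting as before gives
\[
u_{n,x}-u_x=(v_n-v)\circ\eta_n^{-1}+\big(v\circ\eta_n^{-1}-v\circ\eta^{-1}\big),
\]
whose first summand again has $C^0$-norm $\|v_n-v\|_{C^0}\to0$. The second summand is genuinely delicate: $v$ lies only in $C$, so it need not be uniformly continuous, and composing a merely bounded continuous function with a uniformly convergent family of diffeomorphisms does not in general preserve uniform convergence, so this term cannot be controlled by $\|\eta_n^{-1}-\eta^{-1}\|_{C^0}$ alone. I would first try to extract a uniform modulus of continuity for $v$ from its own equation \eqref{veq}; however, the presence of the non-smoothing term $-\tfrac32v^2$ there means $v$ need not inherit any regularity beyond that of $\partial_xu_0$, so I expect this attempt to fail for general data.

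In fact I suspect this is not merely a technical gap but a genuine obstruction. The Galilean symmetry $u(x,t)\mapsto u(x-\tfrac32ct,t)+c$ of \eqref{fwnl} shows that the datum $u_0+c$ produces the solution $u(x-\tfrac32ct,t)+c$, whose spatial derivative is $u_x(\cdot-\tfrac32ct,t)$. Taking $c=\varepsilon_n\to0$ gives a $C^1$-convergent sequence of data for which $\sup_t\|u_{n,x}(t)-u_x(t)\|_{C^0}=\sup_t\|u_x(\cdot-\tfrac32\varepsilon_nt,t)-u_x(\cdot,t)\|_{C^0}$, and this fails to vanish whenever $u_x(\cdot,t)$ is not uniformly continuous (e.g. $u_0'(x)=\sin(x^2)$, with $u_0$ a bounded Fresnel-type integral). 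Consequently I would expect the clean statement to hold only for data with uniformly continuous derivative, or in a locally uniform $C^1$ topology, and I regard isolating exactly this hypothesis as the crux of the lemma; the remaining pieces are routine triangle-inequality estimates of the type already carried out in the proof of Lemma \ref{lipex}.
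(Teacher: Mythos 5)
Your strategy coincides with the paper's up to the final step: continuous dependence from the ODE theorem gives $w_n\to w$ in $C([-T,T];C^1)$, the integral formula for $\eta$ gives $\eta_n\to\eta$, the two--sided bounds on $\p_x\eta$ give convergence of $\eta_n^{-1}$ and $\p_x\eta_n^{-1}$, and one then composes. The difference is that you stop before the finish line: you do not prove the lemma, but instead argue that the summand $v\circ\eta_n^{-1}-v\circ\eta^{-1}$ (equivalently $w_x\circ\eta_n^{-1}-w_x\circ\eta^{-1}$) cannot be controlled in the sup norm because $v(\cdot,t)$ is only bounded and continuous, not uniformly continuous. As a submission this is therefore incomplete: you neither establish the statement nor exhibit a genuine counterexample. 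The Galilean--shift heuristic is suggestive but not closed --- you would still need to verify that non--uniform continuity of $u_x(\cdot,t)$ persists to some $t_0\neq 0$ (at $t=0$ the shift $\tfrac32 c t$ vanishes, so the putative failure must be located away from the initial time), and that the shifted--and--lifted function is indeed the solution the construction assigns to the data $u_0+c$ on the common interval $[-T,T]$.

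That said, the difficulty you isolate is exactly where the paper's own proof is thinnest. The paper proves only the pointwise (in $x$) limits $\eta_\ee^{-1}(x,t)\to\eta^{-1}(x,t)$ and $\p_x\eta_\ee^{-1}(x,t)\to\p_x\eta^{-1}(x,t)$ and then invokes ``composition and product of continuous functions is continuous''; as written this yields pointwise convergence of $\p_x u_\ee(\cdot,t)$, whereas the $C([-T,T];C^1)$ topology demands convergence uniform in $x$ and $t$. Upgrading the convergence of $\eta_\ee^{-1}$ and $\p_x\eta_\ee^{-1}$ to uniform convergence is easy (the mean--value--theorem estimate used in the next section does it), but passing from there to uniform convergence of $w_x\circ\eta_\ee^{-1}$ requires an equicontinuity statement for $w_x(\cdot,t)$ that neither you nor the paper supplies. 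If $C^1$ is read as bounded functions with bounded \emph{uniformly} continuous derivative, the gap closes at once and your own outline completes the proof; under the broader reading the issue you raise is real and should be addressed explicitly. In short: your proposal contains no false steps, but it does not establish the lemma, and the obstruction you point to is one the paper elides rather than resolves.
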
 
\begin{proof}
Let $\{ u_{\ee,0} \}_{\ee \in (0,1]} $ be a sequence of $C^1$ functions in the ball of radius $r$ where $\lim _{\ee \rightarrow 0} u_{\ee,0} = u_0$, and let $u_\ee$ and $ u$ be the corresponding solutions constructed above. 
For every $t$ fixed, consider
$$
\lim_{\ee\rightarrow 0} \| u_\ee(t) - u(t)\|_{C^1} =\lim_{\ee\rightarrow 0} \| w_\ee \circ \eta_\ee^{-1} (t)- w\circ \eta^{-1} (t)\|_{C^1}. 
$$
We have from the ODE theorem, that $w_\ee$ and $\p_x w_\ee$ depend continuously on $w_{\ee,0}$ (in fact, the dependence is Lipshitz). 
Thus $\lim_{\ee \rightarrow 0} w_\ee = w$ in $C([-T,T];C^1)$. Since the composition and product of continuous functions is continuous, it suffices to show therefore that for every $x$ and $t$ in the domain, $\eta_\ee^{-1} (x,t)$ and $\p_x \eta_\ee^{-1} (x,t)$ depend continuously on $\ee$. That is, the pointwise convergence
$$
\lim_{\ee \rightarrow 0} \eta_\ee^{-1} (x,t)= \eta^{-1} (x,t)\quad \text{ and  } \quad \lim_{\ee \rightarrow 0}\p_x  \eta_\ee^{-1} (x,t)= \p_x \eta^{-1} (x,t). 
$$
We begin by showing the first limit. 
For each $t$ in the interval of existence, let $y_\ee$ be an arbitrary number, and find $x$ such that $ y_\ee =\eta_\ee(x,t)$. Given the aforementioned $x$, let $y  = \eta (x,t)$, then we have the following   equalities
\begin{align}
& \eta^{-1} (y_\ee,t) - \eta^{-1} _\ee (y_\ee,t)  =  \eta^{-1} ( y_\ee-y  +y ,t)-x  = \eta^{-1}( y_\ee-y  +y,t) -\eta^{-1}(y,t)  .
\end{align}
Thus we have 
\begin{align}
&\lim_{\ee \rightarrow 0} \left(  \eta^{-1} (y_\ee,t) - \eta^{-1} _\ee (y_\ee,t)  \right) = \lim_{\ee \rightarrow 0}  \eta^{-1}( y_\ee-y  +y,t) -\eta^{-1}(y,t)  .
\end{align}
Since $\eta^{-1} (t)\in C^1$, we can push the limit inside, from which we obtain 
\begin{align}
&\lim_{\ee \rightarrow 0} \left(  \eta^{-1} (y_\ee,t) - \eta^{-1} _\ee (y_\ee,t)  \right) =\eta^{-1}(  y +  \lim_{\ee \rightarrow 0}  (y_\ee-y) ,t) -\eta^{-1}(y,t)  .
\end{align}
Thus if $ \lim_{\ee \rightarrow 0}  (y_\ee-y) = 0$, then we may conclude 
\begin{align}
&\lim_{\ee \rightarrow 0} \left(  \eta^{-1} (y_\ee,t) - \eta^{-1} _\ee (y_\ee,t)  \right) =0,
\end{align}
at every $y_\ee \in \rr$. This gives us the pointwise limit we desire. 
To show $ \lim_{\ee \rightarrow 0}  (y_\ee-y) =0$, we calculate 
$$
 \lim_{\ee \rightarrow 0}  (y_\ee-y) = \lim_{\ee \rightarrow 0}   (\eta_\ee(x,t) - \eta(x,t) ) =  \lim_{\ee \rightarrow 0} \frac32\int_0^t  w_\ee(x,\tau) - w(x,\tau) d\tau = 0, 
$$
since $w_\ee \rightarrow w$ in $C([-T,T]; C^1)$. 
Next, we shall show that $\lim_{\ee \rightarrow 0}\p_x  \eta_\ee (x,t)= \p_x \eta (x,t)$. Indeed, we have 
\begin{align}
\p_x \eta_\ee (x,t) = 1 + \frac32\int_0^t \p_x w_\ee (x,\tau) d\tau . 
\end{align}
Since $w_\ee \rightarrow w \in C([-T,T]; C^1) $ as $\ee \rightarrow 0$, we conclude 
\begin{align}
 \lim_{\ee \rightarrow 0}  \p_x \eta_\ee (x,t) = 1 + \frac32\int_0^t \p_x w (x,\tau) d\tau =  \p_x \eta (x,t) . 
\end{align}
We now show $\lim_{\ee \rightarrow 0}\p_x  \eta_\ee^{-1} (x,t)= \p_x \eta^{-1} (x,t)$. 
We have  by the inverse function theorem
$$
 \p_x  \eta_\ee^{-1} (x,t)=\frac{1}{\p_x \eta_\ee (\eta_\ee^{-1}(x,t) ,t)}.  
$$
Since $\p_x \eta_\ee(x,t)$ is continuous in the spacial variable, and both $\p_x \eta_\ee (x,t) \rightarrow \p_x \eta(x,t)$ and $\eta_\ee^{-1} (x,t) \rightarrow \eta^{-1} (x,t) $, we may conclude 
\begin{align}
 \lim_{\ee \rightarrow 0}  \p_x  \eta_\ee^{-1} (x,t)=\frac{1}{   \p_x \eta (\eta^{-1}(x,t),t)}=\p_x \eta ^{-1}(x,t).  
\end{align}
In other words, we have established the claimed convergence.  This concludes our proof of Theorem \ref{wp}.
\end{proof}
\section{H\"older continuity of the data--to--solution map}
The data--to--solution map is H\"older continuous if we consider a weaker topology. The next lemma shows that the data--to--solution map is Lipschitz continuous if we consider the $C^0$ topology. 
\begin{lemma}
The data--to--solution map $u_0\mapsto u$ from $C^0$ to $C([-T, T]; C^0)$, is Lipschitz. 
\end{lemma}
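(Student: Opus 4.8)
The plan is to work directly with the Lagrangian representation $u_i=w_i\circ\eta_i^{-1}$ of the two solutions generated by data $u_{1,0},u_{2,0}$, and to run a Gronwall argument on the three scalar quantities
$$
a(t)=\|u_1(t)-u_2(t)\|_{C^0},\qquad b(t)=\|w_1(t)-w_2(t)\|_{C^0},\qquad d(t)=\|\eta_1(t)-\eta_2(t)\|_{C^0}.
$$
First I would record the elementary facts about the nonlocal operator $P(u)\doteq\p_x(1-\p_x^2)^{-1}u=-\tfrac12\,\text{sgn}(\cdot)e^{-|\cdot|}*u$: since its kernel has unit $L^1$ norm, Young's inequality gives $\|P(u_1)-P(u_2)\|_{C^0}\le\|u_1-u_2\|_{C^0}$, and because $\p_x P(u)=(1-\p_x^2)^{-1}u-u$ one also has $\|\p_x P(u)\|_{C^0}\le 2\|u\|_{C^0}\le 2r$ along the solutions.

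Next, writing the $w$-equation in the Eulerian frame, $\p_t w_i=P_1(w_i,q_i)=P(u_i)\circ\eta_i$, I would split
$$
P(u_1)(\eta_1)-P(u_2)(\eta_2)=\big[P(u_1-u_2)(\eta_1)\big]+\big[P(u_2)(\eta_1)-P(u_2)(\eta_2)\big],
$$
so that the first bracket is controlled by $a(t)$ (linearity plus the contraction bound) and the second by $2r\,d(t)$ (mean value theorem with the bound on $\p_x P(u_2)$). Integrating from $0$ and using $w_i(0)=u_{i,0}$ yields $b(t)\le\|u_{1,0}-u_{2,0}\|_{C^0}+\int_0^t\!\big(a+2r\,d\big)\,d\tau$. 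A parallel decomposition $u_1-u_2=(w_1-w_2)\circ\eta_1^{-1}+\big(u_2\circ\eta_2\circ\eta_1^{-1}-u_2\big)$, together with the size estimate $\|\p_x u_2\|_{C^0}\le 2\|u_{2,0}\|_{C^1}\doteq M$, gives $a(t)\le b(t)+M\,d(t)$; and from $\eta_i(x,t)=x+\tfrac32\int_0^t w_i\,d\tau$ one obtains $d(t)\le\tfrac32\int_0^t b\,d\tau$.

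Finally I would eliminate $a$ and $d$ in favour of $b$, producing a single linear integral inequality $b(t)\le\|u_{1,0}-u_{2,0}\|_{C^0}+\int_0^t\!\big(b+(M+2r)\,d\big)\,d\tau$ with $d\le\tfrac32\int_0^t b$, to which Gronwall applies, giving $b(t)\le C(T)\,\|u_{1,0}-u_{2,0}\|_{C^0}$ on $[-T,T]$; back-substitution then bounds $d$ and hence $a$ by the same data difference, which is exactly the asserted Lipschitz estimate.

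The one point requiring care — and the reason the estimate is genuinely $C^0$ rather than $C^1$ — is that I must never invoke the Jacobian difference $q_1-q_2$, which cannot be bounded in $C^0$ by a $C^0$ difference of the data, since its initial value already involves $\p_x u_{1,0}-\p_x u_{2,0}$. Treating the nonlocal term in the Eulerian frame, where $P$ is convolution with an $L^1$ kernel and $\p_x P$ is bounded, is precisely what lets me bypass $q_1-q_2$ and close the system using only $a$, $b$, $d$. The finiteness of the spatial Lipschitz constant $M$ of $u_2$ is where the $C^1$-regularity of the data is silently used, but the resulting Lipschitz constant of the solution map is measured against the weaker $C^0$ distance of the data, as claimed.
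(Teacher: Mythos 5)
Your argument is correct, and it takes a genuinely different route from the paper. The paper's proof does not run a Gronwall iteration at all: it invokes the Lipschitz dependence on initial data furnished by the ODE theorem for the system in $Y=C^1\times C\times C$ to get $\|w_\ee(t)-w(t)\|\le c\|u_{\ee,0}-u_0\|$, then propagates this to $\eta$, to $\eta^{-1}$ (via the lower bound $\p_x\eta\ge\tfrac{173}{200}$), and finally to $u=w\circ\eta^{-1}$ by the same triangle-inequality/mean-value splitting you use in your step $a\le b+Md$. You instead close a coupled system of integral inequalities for $a=\|u_1-u_2\|_{C^0}$, $b=\|w_1-w_2\|_{C^0}$, $d=\|\eta_1-\eta_2\|_{C^0}$, using that $P=\p_x(1-\p_x^2)^{-1}$ is convolution with a unit-mass $L^1$ kernel and that $\p_xP(u)=(1-\p_x^2)^{-1}u-u$ is bounded by the size of the solution. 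The payoff of your version is that it honestly produces a bound by the $C^0$ distance of the data: the paper's appeal to the ODE theorem yields Lipschitz dependence in the $Y$-norm, i.e.\ a bound by $\|u_{\ee,0}-u_0\|_{C^1}$, and its subsequent assertion ``for $s=0,1$'' of the estimate $\|w_\ee-w\|_{C^s}\le c\|u_{\ee,0}-u_0\|_{C^s}$ is exactly the $s=0$ statement that your Gronwall argument on $b$ supplies; your observation that one must avoid the Jacobian difference $q_1-q_2$ (whose initial value involves $\p_xu_{1,0}-\p_xu_{2,0}$) is precisely why a $C^0$-only closure is needed and why the Eulerian treatment of the nonlocal term does the job. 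The only costs of your route are a Lipschitz constant of the form $e^{C(r+M)T}$ rather than the paper's explicit $2rct+c$, and the (harmless, and shared by the paper) standing assumption that both data lie in a fixed $C^1$-ball so that $M=\sup_t\|\p_xu_2(t)\|_{C^0}$ is finite.
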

\begin{proof}
Let $\{ u_{\ee,0} \}_{\ee \in (0,1]} $ be a sequence of $C^1$ functions in the ball of radius $r$, and let $u_\ee$ be the corresponding solutions constructed in the previous section. 
We will show that 
$$
\sup_{x\in \rr}  |u_{\ee}(x,t) - u  (x,t) | \le c \sup_{x\in \rr}  |u_{\ee,0} (x) - u_0 (x)  | .
$$
Let us define 
$$
y_{\ee,0} = \left(
\begin{array}{c}
u_{\ee,0}\\
\frac{d}{dx} u_{\ee,0}\\
1
\end{array}
\right) ,
$$
and let $y_\ee  = (w_\ee, v_\ee, q_\ee) \in C^1( [-T, T; Y) $ be the corresponding solution. 
Then the mapping $y_{\ee,0}  \mapsto y_\ee$ is locally Lipschitz continuous, by the ODE theorem. In particular, for all $t$ sufficiently small, there exists a constant $c$, independent of $\ee$ such that
$$
\| w_\ee (t) - w (t) \|_{C^1} \le c\|w_{\ee,0} - w_0\|_{C^1} =c  \|u_{\ee,0} - u_0\|_{C^1} .
$$
Using the above functions, we have 
$$
\eta_\ee (x,t) - \eta(x,t)  = \frac32\int_0^t [w_\ee(x,\tau)  - w (x,\tau)] d\tau, 
$$
from which we can conclude for $s = 0,1$
$$
\|  \eta_\ee (x,t) - \eta(x,t)  \|_{C^s} \le  \frac32\int_0^t \|  w_\ee(x,\tau)  - w (x,\tau) \|_{C^s}  d\tau \le c t \|u_{\ee,0} - u_0\|_{C^s}.
$$
Using the Lipschitz continuity of $\eta$, we will show that the corresponding inverse functions converge
\begin{align}
\lim_
{\ee\rightarrow 0} \eta^{-1}_\ee(x,t) = \eta^{-1} (x,t). 
\end{align}
For each $t$ in the interval of existence, let $y$ be an arbitrary number such that $ y =\eta(x,t)$, for some $x\in \rr$ and for $\ee>0$, let $y_\ee = \eta_\ee(x,t)$, then
$$
| \eta^{-1} (y,t) - \eta^{-1} _\ee (y,t) | = |x - \eta^{-1}_\ee( y-y_\ee +y_\ee,t) | = |\eta^{-1}_\ee(y_\ee,t) - \eta^{-1}_\ee( y-y_\ee +y_\ee,t) |.
$$
Since 
$\eta_\ee (x,t) \in C([-T,T];C^1)$, we have 
$$
| \eta^{-1} (y,t) - \eta^{-1} _\ee (y,t) | \le \sup_{x \in \rr}  |\p_x \eta^{-1}_\ee (x,t) | |y_\ee - ( y-y_\ee +y_\ee) | = 
 \sup_{x \in \rr} \left |\frac{1}{\p_x \eta_\ee (\eta_\ee^{-1}(x,t) ,t)} \right| |  y-y_\ee  | .
$$
Substituting the definitions of $y$ and $y_\ee$, and using $\frac{173}{200} \le \p_x \eta_\ee (\eta_\ee^{-1}(x,t) ,t)   $ we obtain 
\begin{align}\label{eta_inv_est} 
| \eta^{-1} (y,t) - \eta^{-1} _\ee (y,t) | \le  
\frac{200}{173} | \eta(x,t)-\eta_\ee(x,t)  | \le 2 c t \|u_{\ee,0} - u_0\|_{C^s} ,
\end{align}
for $s = 0,1$.
Thus $ \lim_{\ee \rightarrow 0} \eta^{-1} _\ee (x,t) =\eta^{-1} _\ee (x,t) \in C([-T,T]; C^0) .$
Finally, we consider the difference $|u_{\ee}(x,t) - u  (x,t) |$. 
We have, 
$$
|u_{\ee}(x,t) - u  (x,t) | = |w_{\ee}(\eta_\ee^{-1} (x,t),t) - w (\eta^{-1} (x,t),t) |.
$$
By the triangle inequality, this is bounded by 
$$
|u_{\ee}(x,t) - u  (x,t) | \le  |w_{\ee}(\eta_\ee^{-1} (x,t),t) -w_{\ee}(\eta^{-1} (x,t),t)  |+
| w_{\ee}(\eta^{-1} (x,t),t)  - w (\eta^{-1} (x,t),t) |.
$$
Since $\eta(x,t)$ is a diffeomorphism, the second term is bounded by 
$$
| w_{\ee}(\eta^{-1} (x,t),t)  - w (\eta^{-1} (x,t),t) | \leq
 \sup_{x\in \rr} | w_{\ee}(x,t)  - w (x,t) |,
$$
which is bounded by the $C^s$, $s=0,1$,  norm.  Thus we have
$$
|u_{\ee}(x,t) - u  (x,t) | \le  |w_{\ee}(\eta_\ee^{-1} (x,t),t) -w_{\ee}(\eta^{-1} (x,t),t)  |+
\| w_{\ee}(t)  - w (t) \|_{C^s}.
$$
which, by previous estimation of the above second term, gives us 
$$
|u_{\ee}(x,t) - u  (x,t) | \le  |w_{\ee}(\eta_\ee^{-1} (x,t),t) -w_{\ee}(\eta^{-1} (x,t),t)  |+
c\| u_{\ee,0} - u_0\|_{C^s} .
$$
We apply the mean value theorem to the first term, from  which we conclude
$$
|u_{\ee}(x,t) - u  (x,t) | \le   \sup_{x\in \rr} |\p_x w_{\ee} (x,t) ||\eta_\ee^{-1} (x,t)-\eta^{-1} (x,t)   |+
c\| u_{\ee,0} - u_0\|_{C^s} .
$$
Using $w_\ee \in C([-T,T];C^1)$ and is bounded by $r$, and inequality \ref{eta_inv_est}, we obtain 
$$
|u_{\ee}(x,t) - u  (x,t) | \le   2r c t \|u_{\ee,0} - u_0\|_{C^s} +
c\| u_{\ee,0} - u_0\|_{C^s} ,
$$
which shows, in particular, that the data--to--solution map is Lipschitz continuous from $C^0 $ to $C([-T,T];C^0) $. 
\end{proof}

Now that we have established continuity of the data--to--solution map in the $C^1$ topology and Lipschitz continuity in the $C^0$ topology, we may conclude that the continuity is H\"older continuous in the intermediate topologies.
\begin{lemma}
$
\sup_{t \in [-T, T]} \| u_\ee (t)- u(t)\|_{C^{0,\alpha} } \lesssim \| u_{\ee,0} - u_0\|_{C^{0,\alpha} } ^{1-\alpha} . $
\end{lemma}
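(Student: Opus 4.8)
The plan is to obtain the claimed Hölder bound by interpolating the two endpoint estimates already at our disposal: the Lipschitz continuity of the data--to--solution map in the $C^0$ topology from the previous lemma, and the uniform $C^1$ size bound $\sup_{t\in[-T,T]}\|u(t)\|_{C^1}\le 2\|u_0\|_{C^1}$. The point is that we do not---and cannot---expect a convergence rate in $C^1$; instead we only use that $u_\ee-u$ stays bounded in $C^1$ while converging linearly in $C^0$, and trade the two off against each other in the intermediate Hölder norm.

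First I would record the elementary interpolation inequality: for any bounded $f\in C^1(\rr)$ and any $0\le\alpha<1$,
\[
[f]_{\alpha}\doteq\sup_{x\ne y}\frac{|f(x)-f(y)|}{|x-y|^{\alpha}}\ \lesssim\ \|f\|_{C^0}^{1-\alpha}\,\|\p_x f\|_{C^0}^{\alpha}.
\]
To prove this I split the supremum according to whether $|x-y|\le\delta$ or $|x-y|>\delta$ for a parameter $\delta>0$ to be chosen. On the short range the mean value theorem gives $|f(x)-f(y)|\le\|\p_x f\|_{C^0}|x-y|$, so the difference quotient is $\le\|\p_x f\|_{C^0}\delta^{1-\alpha}$; on the long range the crude bound $|f(x)-f(y)|\le 2\|f\|_{C^0}$ gives $\le 2\|f\|_{C^0}\delta^{-\alpha}$. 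Choosing $\delta=\|f\|_{C^0}/\|\p_x f\|_{C^0}$ balances the two contributions and yields the stated inequality.

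Then I would apply this with $f=u_\ee(t)-u(t)$ for each fixed $t\in[-T,T]$. For the derivative factor, the size estimate gives $\|\p_x(u_\ee(t)-u(t))\|_{C^0}\le\|u_\ee(t)\|_{C^1}+\|u(t)\|_{C^1}\le 2(\|u_{\ee,0}\|_{C^1}+\|u_0\|_{C^1})$, which is bounded uniformly in $t$ and $\ee$ since the data lie in a fixed ball of radius $r$; hence $\|\p_x(u_\ee-u)\|_{C^0}^{\alpha}$ is absorbed into the implicit constant. For the $C^0$ factor, the previous lemma gives $\|u_\ee(t)-u(t)\|_{C^0}\le c\|u_{\ee,0}-u_0\|_{C^0}\le c\|u_{\ee,0}-u_0\|_{C^{0,\alpha}}$. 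Combining, the seminorm obeys $[u_\ee(t)-u(t)]_\alpha\lesssim\|u_{\ee,0}-u_0\|_{C^{0,\alpha}}^{1-\alpha}$; the remaining $C^0$ part of the $C^{0,\alpha}$ norm is handled the same way by writing $\|u_\ee-u\|_{C^0}=\|u_\ee-u\|_{C^0}^{1-\alpha}\|u_\ee-u\|_{C^0}^{\alpha}$ and using that $\|u_\ee-u\|_{C^0}^{\alpha}$ is bounded. Taking the supremum over $t\in[-T,T]$ finishes the argument.

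I expect the only genuine subtlety---rather than an obstacle---to be bookkeeping: ensuring every constant produced by the interpolation and by the Lipschitz lemma is independent of both $t\in[-T,T]$ and $\ee$, which follows because all solutions are constructed from data in a common ball of radius $r$ and share the lifespan $T$. One should also note explicitly that $\|u_{\ee,0}-u_0\|_{C^{0,\alpha}}\le 1$ eventually as $\ee\to0$, so that the linear $C^0$ estimate is dominated by the $(1-\alpha)$-power appearing on the right-hand side, which is what lets the full $C^{0,\alpha}$ norm be controlled by that single power.
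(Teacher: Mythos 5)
Your proposal is correct and follows essentially the same route as the paper: interpolate the Hölder seminorm between the Lipschitz $C^0$ estimate of the previous lemma and the uniform $C^1$ bound on $u_\ee - u$. The only cosmetic difference is that you derive the elementary inequality $[f]_\alpha \lesssim \|f\|_{C^0}^{1-\alpha}\|\p_x f\|_{C^0}^{\alpha}$ by splitting the supremum at a scale $\delta$ and optimizing, whereas the paper obtains it directly by factoring the difference quotient as $|f(x)-f(y)|^{1-\alpha}\bigl(|f(x)-f(y)|/|x-y|\bigr)^{\alpha}$ and bounding the two factors separately.
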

\begin{proof}
Let $v(x,t) =  u_\ee(x,t)  - u(x,t) $. Then by definition 
\begin{align}
\| v(t)\|_{C^{0,\alpha} }  = \sup_{x\neq y} \frac{ |v(x,t) -v( y,t)|}{|x-y|^\alpha}  = \sup_{x\neq y}  |v(x,t) -v( y,t)|^{1-\alpha}  \frac{ |v(x,t) -v( y,t)|^{\alpha} }{|x-y|^\alpha}  .
\end{align}
This is bounded by  
\begin{align}
\| v(t)\|_{C^{0,\alpha} }  \le (2 \| v(t)\|_{C^0})^{1-\alpha}  \| v(t)\|_{C^1} ^\alpha  .
\end{align}
Using the Lipschitz continuity in $C^0$, and the fact that $ \| v(t)\|_{C^1}  \le 2 r$, we obtain 
\begin{align}
\| v(t)\|_{C^{0,\alpha} }  \le   2 r^\alpha \| v(0)\|_{C^0}^{1-\alpha}   ,
\end{align}
which completes the proof of the lemma.
\end{proof}

\end{document}